\newtheorem{theorem}{Theorem}[section]
\newtheorem{lemma}[theorem]{Lemma}
\newtheorem{proposition}[theorem]{Proposition}
\newtheorem{corollary}[theorem]{Corollary}
\theoremstyle{definition}
\newtheorem{example}[theorem]{Example}
\newtheorem{algorithm}[theorem]{Algorithm}
\theoremstyle{remark}
\newtheorem{remark}[theorem]{Remark}
\begin{document}
	
	\title{Modular Frobenius pseudo-varieties}
	
	\author{Aureliano M. Robles-P\'erez\thanks{Both authors are supported by the project MTM2017-84890-P (funded by Mi\-nis\-terio de Econom\'{\i}a, Industria y Competitividad and Fondo Europeo de Desarrollo Regional FEDER) and by the Junta de Andaluc\'{\i}a Grant Number FQM-343.} \thanks{Departamento de Matem\'atica Aplicada \& Instituto de Matemáticas (IMAG), Universidad de Granada, 18071-Granada, Spain. \newline E-mail: {\bf arobles@ugr.es}. ORCID: {\bf 0000-0003-2596-1249}.}
		\mbox{ and} Jos\'e Carlos Rosales$^*$\thanks{Departamento de \'Algebra \& Instituto de Matemáticas (IMAG), Universidad de Granada, 18071-Granada, Spain. \newline E-mail: {\bf jrosales@ugr.es}. ORCID: {\bf 0000-0003-3353-4335}.} }
	
	\date{ }
	
	\maketitle
	
	\begin{abstract}
		If $m \in \mathbb{N} \setminus \{0,1\}$ and $A$ is a finite subset of $\bigcup_{k \in \mathbb{N} \setminus \{0,1\}} \{1,\ldots,m-1\}^k$, then we denote by
		\begin{align*}
			\mathscr{C}(m,A) = \Big\{S\in \mathscr{S}_m \mid s_1+\cdots+s_k-m \in S \mbox{ if } (s_1,\ldots,s_k)\in S^k \mbox{ and } \\ s_1 \bmod m, \ldots, s_k \bmod m)\in A \Big\}.
		\end{align*}
		In this work we prove that $\mathscr{C}(m,A)$ is a Frobenius pseudo-variety. We also show algorithms that allows us to establish whether a numerical semigroup belongs to $\mathscr{C}(m,A)$ and to compute all the elements of $\mathscr{C}(m,A)$ with a fixed genus. Moreover, we introduce and study three families of numerical semigroups, called of second-level, thin and strong, and corresponding to $\mathscr{C}(m,A)$ when $A=\{1,\ldots,m-1\}^3$, $A=\{(1,1),\ldots,(m-1,m-1)\}$, and $A=\{1,\ldots,m-1\}^2 \setminus \{(1,1),\ldots,(m-1,m-1)\}$, respectively.
	\end{abstract}
	\noindent {\bf Keywords:} Modular pseudo-varieties, second-level numerical semigroups, thin numerical semigroups, strong numerical semigroups, tree associated (with a modular pseudo-variety).
	
	\medskip
	
	\noindent{\it 2010 AMS Classification:} 20M14. 	
	
	\section{Introduction}
	
	Let $\mathbb{N}$ be the set of non-negative integers. A \emph{numerical semigroup} is a subset $S$ of $\mathbb{N}$ that is closed under addition, $0 \in S$ and $\mathbb{N} \setminus S$ is finite. The \emph{Frobenius number} of $S$, denoted by $\mathrm{F}(S)$, is the greatest integer that does not belong to $S$. The cardinality of $\mathbb{N}\setminus S$, denoted by $\mathrm{g}(S)$, is the \emph{genus} of $S$.
	
	A {\em Frobenius pseudo-variety} is a non-empty family $\mathcal{P}$ of numerical semigroups that fulfils the following conditions.
	\begin{enumerate}
		\setlength\itemsep{0pt}
		\item $\mathcal{P}$ has a maximum element (with respect to the inclusion order).
		\item If $S,T \in \mathcal{P}$, then $S\cap T\in \mathcal{P}$.
		\item If $ S\in \mathcal{P}$ and $S\neq \max(\mathcal{P})$, then $S\cup \{{\rm F}(S)\} \in \mathcal{P}$.
	\end{enumerate}
	
	The \emph{multiplicity} of a numerical semigroup $S$, denoted by $\mathrm{m}(S)$, is the least positive integer that belongs to $S$. If $m$ is a positive integer, then we denote by $\mathscr{S}_m=\left\{S \mid S \mbox{ is a numerical semigroup with } \mathrm{m}(S)=m \right\}$.
	
	Let $m\in \mathbb{N} \setminus \{0,1\}$ and let $A$ be a finite subset of $\bigcup_{k \in \mathbb{N} \setminus \{0,1\}} \{1,\ldots,m-1 \}^k$ (where $X^k=X\times \stackrel{^{(k)}}{\cdots} \times X = \left\{(x_1,\ldots,x_k) \mid x_1,\ldots,x_k \in X \right\}$). We denote by
	\begin{align*}
		\mathscr{C}(m,A) = \Big\{S\in \mathscr{S}_m \mid s_1+\cdots+s_k-m \in S \mbox{ if } (s_1,\ldots,s_k)\in S^k \mbox{ and } \\ s_1 \bmod m, \ldots, s_k \bmod m)\in A \Big\}.
	\end{align*}
	Our main purpose in this work is to study the set $\mathscr{C}(m,A)$.
	
	In Section~\ref{C(m,A)-pseudo-variety} we show that $\mathscr{C}(m,A)$ is a Frobenius pseudo-variety with maximum element given by $\Delta(m)=\left\{0,m,\to \right\}$ (where the symbol $\to$ means that every integer greater than $m$ belongs to $\Delta(m)$). Thus, we call the pseudo-varieties that arise in this way \textit{modular Frobenius pseudo-varieties}. Also, we give an algorithm that allows us to establish whether a numerical semigroup belongs to $\mathscr{C}(m,A)$. In addition, with the help of the results in \cite{ampa}, we can arrange $\mathscr{C}(m,A)$ in a rooted tree and we find an algorithm to compute all the elements of $\mathscr{C}(m,A)$ with a fixed genus.
	
	If $X$ is a non-empty subset of $\mathbb{N}$, then we denote by $\langle X \rangle$ the submonoid of $(\mathbb{N},+)$ generated by $X$, that is,
	$$\langle X \rangle=\big\{\lambda_1x_1+\cdots+\lambda_nx_n \mid n\in\mathbb{N}\setminus \{0\}, \ x_1,\ldots,x_n\in X, \ \lambda_1,\ldots,\lambda_n\in \mathbb{N}\big\}.$$
	It is well known (see Lemma~2.1 of \cite{springer}) that $\langle X \rangle$ is a numerical semigroup if and only if $\gcd(X)=1$. If $M$ is a submonoid of $(\mathbb{N},+)$ and $M=\langle X \rangle$, then we say that $X$ is a \emph{system of generators} of $M$. Moreover, if $M\not=\langle Y \rangle$ for any subset $Y\subsetneq X$, then we say that $X$ is a \emph{minimal system of generators} of $M$. In Corollary~2.8 of \cite{springer} it is shown that each submonoid of $(\mathbb{N},+)$ has a unique minimal system of generators and that such a system is finite. We denote by $\mathrm{msg}(M)$ the minimal system of generators of $M$. The cardinality of $\mathrm{msg}(M)$, denoted by $\mathrm{e}(M)$, is the \emph{embedding dimension} of $M$.
	
	By applying Proposition~2.10 of \cite{springer}, if $S$ is a numerical semigroup, then we know that $\mathrm{e}(S)\leq\mathrm{m}(S)$. A numerical semigroup $S$ has \emph{maximal embedding dimension} if $\mathrm{e}(S)=\mathrm{m}(S)$. This family of numerical semigroups has been extensively studied (for instance, see \cite{barucci} and \cite{springer}). Let us denote by $\mathcal{M}_m$ the set formed by the numerical semigroups that have maximal embedding dimension and with multiplicity $m$. It is easy to see that $\mathcal{M}_m=\mathscr{C}(m,\{1,\ldots,m-1\}^2)$.
	
	In Sections~\ref{2-level-ns}, \ref{thin-ns}, and \ref{strong-ns} we study the family $\mathscr{C}(m,A)$ for
	\begin{itemize}
		\setlength\itemsep{0pt}
		\item $A=\{1,\ldots,m-1\}^3$,
		\item $A=\{(1,1),\ldots,(m-1,m-1)\}$,
		\item $A=\{1,\ldots,m-1\}^2 \setminus \{(1,1),\ldots,(m-1,m-1)\}$,
	\end{itemize}
	respectively. Observe that, in a certain sense, these families are generalisation of $\mathcal{M}_m$.
	
	To finish this introduction, we are going to comment several ideas (see \cite{bags,bagsvo,klara}) that motivate the study of modular Frobenius pseudo-varieties.
	
	First of all, observe that modular Frobenius pseudo-varieties are related to the non-homogeneous patterns with positive coefficients that involve in their constant parameter the multiplicity of the numerical semigroup (see \cite{bagsvo}).
	
	The notion of non-homogeneous pattern was introduced in \cite{bagsvo} as a generalisation of the notion of homogeneous pattern \cite{bags}. Thus, a \emph{linear pattern} $p(x_1,\ldots,x_n)$ is an expression of the form $a_1x_1+\cdots+a_nx_n+a_0$, with $a_0\in\mathbb{Z}$ (as usual, $\mathbb{Z}$ is the set of integers numbers) and $a_1,\ldots,a_n\in\mathbb{Z}\setminus\{0\}$. In particular, the (linear) pattern $p$ is \emph{homogeneous} if $a_0=0$, and \emph{non-homogeneous} if $a_0\not=0$.
	
	On the other hand, it is said that a numerical semigroup $S$ admits the homogeneous pattern $p$ if $p(s_1,\ldots,s_n)\in S$ for every non-increasing sequence $(s_1,\ldots,s_n)\in S^n$. The corresponding definition for non-homogeneous patterns is a bit different: a numerical semigroup $S$ admits the non-homogeneous pattern $p$ if $p(s_1,\ldots,s_n)\in S$ for every non-increasing sequence $(s_1,\ldots,s_n)\in (S\setminus\{0\})^n$.
	
	Having in mind that $M(S)=S\setminus\{0\}$ is an ideal of the numerical semigroup $S$ (in fact, $M(S)$ is the maximal ideal of $S$), in \cite{klara} the concepts of the above paragraph were extended in the following way: an ideal $I$ of a numerical semigroups $S$ admits the pattern $p$ if $p(s_1,\ldots,s_n)\in S$ for every non-increasing sequence $(s_1,\ldots,s_n)\in I^n$.
	
	At this point we have the main difference between the above-mentioned papers and our proposal in this work: we do not keep the non-increasing condition (on the sequences in which we evaluate the pattern) in mind. In addition, we take sequences in sets $A$ without structure (that is, $A$ does not have to be an ideal of a numerical semigroup).
	
	Now, let us denote by $\mathscr{S}_m(p)$ the family of numerical semigroups with multiplicity $m$ that admit the pattern $p$. If we take the patterns $p_1=2x_1+x_2-m$ and $p_2=x_1+2x_2-m$, then $\mathscr{S}_m(p_1,p_2) = \mathscr{S}_m(p_1) \cap \mathscr{S}_m(p_2)$ (that is, the family of numerical semigroups which admit $p_1$ and $p_2$ simultaneously) is equal to $\mathscr{C}(m,A)$ for
	\[ A = \left\{ (a_1,a_2,a_3) \in \{1,\ldots,m-1\}^3 \mid a_1\equiv a_2 \pmod m \right\}, \]
	being this one an easy example of the connection between modular Frobenius pseudo-varieties and families of numerical semigroups defined by non-homogeneous patterns (as considered in \cite{bagsvo}).
	
	A first question studied in \cite{bagsvo,klara} is the next one: if $p=a_1x_1+\cdots+a_nx_n+a_0$ is a non-homogeneous pattern, for which values of $a_0$ we have that the family $\mathscr{S}(p)$, of numerical semigroups that admit the pattern $p$, is non-empty? If the answer is affirmative, then it is said that $p$ is an \emph{admissible pattern}. In our case, we have imposed that $a_0=-m$, where $m$ will be the multiplicity of all the numerical semigroups, in order to get a relevant advantage: we want to build in a (more or less) easy and explicit way the Apéry set of the numerical semigroups belonging to $\mathscr{C}(m,A)$. As a consequence of this fact, we will be able to obtain extra information about such families of numerical semigroups.
	
	Of course, in addition to $-m$, it is possible to consider other values that maintain the pseudo-variety structure. Thus, for $p=x_1+\cdots+x_n+a_0$, we have that, independently of the chosen set $A$, every numerical semigroup $S$, such that $a_0\in S$, admits the pattern $p$.
	
	Now, let us observe that, if $A_1\subseteq A_2$, then $\mathscr{C}(m,A_2)\subseteq\mathscr{C}(m,A_1)$. This fact allow us to have sufficient conditions on the patterns $p=a_1x_1+\cdots+a_nx_n-km$, with $k\in\mathbb{N}\setminus\{0,1\}$, from the results in \cite{bagsvo,klara}. For example, from \cite[Theorem 4.1]{bagsvo} or \cite[Proposition 20]{klara}, we can assert that $\mathscr{C}(m,A)\not=\emptyset$ if $n-k\geq 1$.
	
	A second question that appears in \cite{bags,klara} is about the \emph{equivalence of patterns}. Briefly, the pattern $p_1$ induces the pattern $p_2$ if $\mathscr{S}(p_2)\subseteq\mathscr{S}(p_1)$ and, moreover, $p_1$ and $p_2$ are equivalent patterns if they induce each other. Maybe the first result of this type is the equivalence between the homogeneous patterns $2x_1-x_2$ and $x_1+x_2-x_3$, which correspond to the family of Arf numerical semigroups (see \cite{CFM}). Since the set $A$ has an important role (not to say the main role) in the families $\mathscr{C}(m,A)$, the large number of possibilities in the choice of $A$ leads us to believe that this is an issue that deserves a new work.
	
	By the way, in Section~\ref{thin-ns} we study the family of thin numerical semigroups, denoted by $\mathscr{T}_m$, and in Section~\ref{strong-ns} we consider the family of strong numerical semigroups, denoted by $\mathscr{R}_m$. These families are associated with the patterns $2x-m$ and $x+y-m$, respectively. However, the set $A$ is different in each case and, as a consequence, we have that there is not a inclusion relation between both of them (see Examples~\ref{exmp28} and \ref{exmp35}). This fact may give an idea about the difficulty of obtaining similar results to those seen in \cite{bags,klara}. 
	
	In any case, we can show simple results about the equivalence (or, at least, the inclusions) of the families $\mathscr{C}(m,A)$.
	
	\begin{remark}\label{rem-01}
		All the patterns $p=x_1+\cdots+x_n-m$ are equivalent (independently of the chosen set $A$) if $n\geq m$, in which case we have that $\mathscr{C}(m,A)=\mathscr{S}_m$. Indeed, applying the pigeonhole principle, if $s_1,\ldots,s_n$ are elements of $S\in\mathscr{S}_m$ such that $s_i\not\equiv 0 \pmod m$, $1\leq i\leq n$, then there exist $i,j\in\{1,\ldots,n\}$, with $i<j$, such that $s_i+\cdots+s_j=km$ for some $k\in\mathbb{N}\setminus\{0\}$. Consequently, $S\in\mathscr{C}(m,A)$.
	\end{remark}
	
	\begin{remark}\label{rem-02}
		Let us set $A=\{(1,1),(3,4)\}$, $B=\{(1,1,2),(1,3,1),(1,3,4)\}$, and $m\geq 5$. Then $\mathscr{C}(m,A) \subseteq \mathscr{C}(m,B)$. In order to verify this inclusion, we take $a_1,a_2,a_3\in S \in \mathscr{C}(m,A)$. 
		\begin{itemize}
			\item If $a_1\equiv 1 \pmod m$, $a_2\equiv 1 \pmod m$, $a_3\equiv 2 \pmod m$, then $a_1+a_2+a_3-m=(a_1+a_2-m)+a_3\in S$ .
			\item If $a_1\equiv 1 \pmod m$, $a_2\equiv 3 \pmod m$, $a_3\equiv 1 \pmod m$, then $a_1+a_2+a_3-m=(a_1+a_3-m)+a_2\in S$.
			\item If $a_1\equiv 1 \pmod m$, $a_2\equiv 3 \pmod m$, $a_3\equiv 4 \pmod m$, then $a_1+a_2+a_3-m=(a_2+a_3-m)+a_1\in S$.
		\end{itemize}
		More generally, let us suppose that, for each $b\in B$, there exists $a\in A$ such that $b$ is obtained by adding coordinates to $a$. Then $\mathscr{C}(m,A) \subseteq \mathscr{C}(m,B)$.
	\end{remark}
	
	As a final recommendation, it is worth mentioning that in \cite[Section 2]{bagsvo} and in \cite[Introduction]{klara} there are several motivating examples, and references, as to why it is interesting to study (non-homogeneous) patterns of numerical semigroups.

	\section{The pseudo-variety $\mathscr{C}\boldsymbol{(m,A)}$}\label{C(m,A)-pseudo-variety}
	
	In this section, $m$ is an integer greater than or equal to $2$ and $A$ is a finite subset of $\bigcup_{k \in \mathbb{N} \setminus \{0,1\}} \{1,\ldots,m-1 \}^k$. Moreover, recall that
	\begin{align*}
		\mathscr{C}(m,A) = \Big\{S\in \mathscr{S}_m \mid s_1+\cdots+s_k-m \in S \mbox{ if } (s_1,\ldots,s_k)\in S^k \mbox{ and } \\ s_1 \bmod m, \ldots, s_k \bmod m)\in A \Big\},
	\end{align*}
	where $x \bmod m$ denotes the remainder after division of $x$ by $m$.
	
	If $S$ is a numerical semigroup and $x\in S\setminus \{0\}$, then the \emph{Ap\'ery set of $x$ in $S$} (see \cite{apery}) is $\mathrm{Ap}(S,x)=\{w(0)=0,w(1),\ldots,w(x-1)\}$, where $w(i)$ is the least element of $S$ that is congruent with $i$ modulus $x$. Observe that an integer $s$ belongs to $S$ if and only if there exists $t\in \mathbb{N}$ such that $s=w(s\bmod x)+tx$.
	
	\begin{proposition}\label{prop1}
		Let $S\in\mathscr{S}_m$ and $\mathrm{Ap}(S,m)=\{w(0)=0,w(1),\ldots,w(m-1)\}$. Then the following conditions are equivalent.
		\begin{enumerate}
			\setlength\itemsep{0pt}
			\item $S\in \mathscr{C}(m,A)$.
			\item $w(i_1)+\cdots+w(i_k)-m\in S$ for all $(i_1,\ldots,i_k)\in A$.
		\end{enumerate}
	\end{proposition}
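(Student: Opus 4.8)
The plan is to establish the two implications separately, in both cases exploiting the fact recalled just before the statement: an integer $s$ lies in $S$ if and only if $s=w(s\bmod m)+tm$ for some $t\in\mathbb{N}$.

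For the implication $(1)\Rightarrow(2)$, I would fix $(i_1,\ldots,i_k)\in A$. Since $A\subseteq\bigcup_{k}\{1,\ldots,m-1\}^k$, each $i_j$ lies in $\{1,\ldots,m-1\}$; hence $w(i_j)\in S$ and, because $w(i_j)\equiv i_j\pmod m$ with $0<i_j<m$, we get $w(i_j)\bmod m=i_j$. Thus $(w(i_1),\ldots,w(i_k))\in S^k$ and $(w(i_1)\bmod m,\ldots,w(i_k)\bmod m)=(i_1,\ldots,i_k)\in A$, so the defining property of $\mathscr{C}(m,A)$ gives $w(i_1)+\cdots+w(i_k)-m\in S$, which is exactly $(2)$.

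For the implication $(2)\Rightarrow(1)$, I would take an arbitrary $(s_1,\ldots,s_k)\in S^k$ with $(s_1\bmod m,\ldots,s_k\bmod m)\in A$, and set $i_j=s_j\bmod m$ for each $j$, so that $(i_1,\ldots,i_k)\in A$ and each $i_j\in\{1,\ldots,m-1\}$. Writing each $s_j=w(i_j)+t_jm$ with $t_j\in\mathbb{N}$ (possible by the characterisation above, since $s_j\in S$ and $s_j\equiv i_j\pmod m$), one computes
\[
s_1+\cdots+s_k-m=\big(w(i_1)+\cdots+w(i_k)-m\big)+(t_1+\cdots+t_k)\,m .
\]
By hypothesis $(2)$ the first summand belongs to $S$, and $(t_1+\cdots+t_k)m\in S$ since $m\in S$ and $S$ is closed under addition; hence the whole expression lies in $S$. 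As $(s_1,\ldots,s_k)$ was arbitrary, $S\in\mathscr{C}(m,A)$.

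I do not expect a serious obstacle here: the argument is a direct translation, and the only points requiring a moment's care are (a) that $w(i_j)\bmod m$ really equals $i_j$ for $i_j\in\{1,\ldots,m-1\}$, needed to land back in $A$ in the first implication, and (b) the additive decomposition $s_j=w(i_j)+t_jm$ together with closure of $S$ under addition, used in the second. The value of the proposition is precisely that it reduces the infinite family of conditions defining $\mathscr{C}(m,A)$ to the finitely many tuples of $A$ evaluated on the finite Apéry set, which is what later makes the membership test for $\mathscr{C}(m,A)$ effective.
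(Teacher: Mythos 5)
Your proposal is correct and follows essentially the same argument as the paper: the first implication evaluates the defining condition of $\mathscr{C}(m,A)$ on the Ap\'ery elements $w(i_1),\ldots,w(i_k)$, and the second decomposes arbitrary $s_j$ as $w(i_j)+t_jm$ and uses closure of $S$ under addition. The only difference is that you spell out explicitly why $w(i_j)\bmod m=i_j$, which the paper leaves implicit.
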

	
	\begin{proof}
		\textit{(1. $\Rightarrow$ 2.)} Since $w(i_1),\ldots,w(i_k)\in S$ and $(w(i_1) \bmod m,\ldots,w(i_k) \bmod m) = (i_1,\ldots,i_k)\in A$, then $w(i_1)+\cdots+w(i_k)-m\in S$.
		
		(\textit{2. $\Rightarrow$ 1.)} Let $s_1,\ldots,s_k \in S$ such that $$(s_1 \bmod m,\ldots,s_k \bmod m) = (i_1,\ldots,i_k)\in A.$$ Then there exist $t_1,\ldots,t_k \in \mathbb{N}$ such that $s_j=w(i_j)+t_jm$, $1\leq j\leq k$, and thus, $s_1+\cdots+s_k = (w(i_1)+\cdots+w(i_k)-m)+(t_1+\cdots+t_k)m\in S$.
	\end{proof}
	
	By using the function \texttt{AperyListOfNumericalSemigroupWRTElement(S,m)} of \cite{numericalsgps}, we can compute $\mathrm{Ap}(S,m)$ from a system of generators of $S$. Thereby, we have the following algorithm to decide if a numerical semigroup $S$ belongs or not to $\mathscr{C}(m,A)$.
	
	\begin{algorithm}\label{alg2}
		\mbox{ } \par	
		\noindent $\ $ INPUT: A finite subset $G$ of positive integers. \par
		\noindent $\ $ OUTPUT: $\langle G \rangle \in \mathscr{C}(m,A)$ or $\langle G \rangle \notin \mathscr{C}(m,A)$.
		\vspace{-3pt}
		\begin{itemize}
			\setlength\itemsep{0pt}
			\item[(1)] If $\min$(G)$\not=m$, return $\langle G \rangle \notin \mathscr{C}(m,A)$.
			\item[(2)] If $\gcd(G)\not=1$, return $\langle G \rangle \notin \mathscr{C}(m,A)$.
			\item[(3)] Compute $\mathrm{Ap}(\langle G \rangle, m) = \{w(0),w(1),\ldots,w(m-1)\}$.
			\item[(4)] If $w(i_1)+\cdots+w(i_k)-m\in S$ for all $(i_1,\ldots,i_k)\in A$, return $\langle G \rangle \in \mathscr{C}(m,A)$.
			\item[(5)] Return $\langle G \rangle \notin \mathscr{C}(m,A)$.
		\end{itemize}
	\end{algorithm}
	
	Let us illustrate the working of the previous algorithm through an example.
	
	\begin{example}\label{exmp3}
		Let us make use of Algorithm~\ref{alg2} with $G=\{5,7,9\}$, $m=5$, and $A=\{(1,3),(2,2)\}$.
		\begin{itemize}
			\setlength\itemsep{0pt}
			\item $\min(G)=5$.
			\item $\gcd(G)=1$.
			\item $\mathrm{Ap}(\langle G \rangle,5) = \{w(0)=0, w(1)=16, w(2)=7, w(3)=18, w(4)=19\}$.
			\item $w(1)+w(3)-5=29\in \langle G \rangle$ and $w(2)+w(2)-5=9\in \langle G \rangle$.	
			\item[] Therefore, $\langle G \rangle = \langle 5,7,9 \rangle \in \mathscr{C}(5,\{(1,3),(2,2)\})$.
		\end{itemize}
	\end{example}
	
	Recall that, if $m$ is an integer greater than or equal to $2$, then we denote by $\Delta(m)=\{0,m,\to\}$. It is clear that $\Delta(m)\in\mathscr{S}_m$ and that, if $s_1,\ldots,s_k\in \Delta(m)\setminus\{0\}$ and $k\geq2$, then $s_1+\cdots+s_k-m\in \Delta(m)$. Therefore, we have the next result.
	
	\begin{lemma}\label{lem4}
		Let $m\in \mathbb{N}\setminus\{0,1\}$ and $A\subseteq \bigcup_{k\in\{2,\to\}} \{1,\ldots,m-1\}^k$ ($A$ finite). Then $\Delta(m)\in \mathscr{C}(m,A)$.
	\end{lemma}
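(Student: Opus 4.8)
The plan is to verify directly that $\Delta(m)$ satisfies the defining membership condition for $\mathscr{C}(m,A)$, which is now essentially immediate from Proposition~\ref{prop1}. First I would observe that $\Delta(m) = \{0,m,\to\} \in \mathscr{S}_m$ trivially, since $m$ is the least positive element and $\mathbb{N}\setminus\Delta(m) = \{1,\ldots,m-1\}$ is finite; closure under addition is clear because the sum of any two elements $\geq m$ is $\geq m$, and adding $0$ changes nothing.

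Next I would compute the Apéry set $\mathrm{Ap}(\Delta(m),m)$. For each $i \in \{1,\ldots,m-1\}$, the least element of $\Delta(m)$ congruent to $i$ modulo $m$ is $m+i$ (since $i$ itself is not in $\Delta(m)$, but $m+i$ is, as $m+i > m$), and of course $w(0)=0$. So $\mathrm{Ap}(\Delta(m),m) = \{0, m+1, m+2, \ldots, 2m-1\}$, i.e.\ $w(i) = m+i$ for $1\leq i\leq m-1$.

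Now I would apply the criterion in part~2 of Proposition~\ref{prop1}: it suffices to check that $w(i_1)+\cdots+w(i_k)-m \in \Delta(m)$ for every $(i_1,\ldots,i_k)\in A$. Since $A \subseteq \bigcup_{k\in\{2,\to\}}\{1,\ldots,m-1\}^k$, every tuple in $A$ has length $k\geq 2$ with each $i_j \in \{1,\ldots,m-1\}$, hence $w(i_j) = m+i_j \geq m+1$. Therefore
\[
w(i_1)+\cdots+w(i_k)-m = (m+i_1)+\cdots+(m+i_k) - m = (k-1)m + (i_1+\cdots+i_k) \geq (k-1)m + k \geq m+2 > m,
\]
using $k\geq 2$, so this quantity lies in $\Delta(m)$. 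By Proposition~\ref{prop1}, $\Delta(m)\in\mathscr{C}(m,A)$.

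There is essentially no obstacle here; the statement is a sanity check that the candidate maximum element of the pseudo-variety actually belongs to it. The only thing to be slightly careful about is the edge behaviour of the Apéry set (confirming $w(i)=m+i$ rather than $i$), and the trivial lower bound $(k-1)m+k > m$, which holds as soon as $k\geq 2$ — exactly the hypothesis imposed on the index set. Alternatively, one could bypass the Apéry-set computation entirely and invoke the remark preceding the lemma, namely that if $s_1,\ldots,s_k\in\Delta(m)\setminus\{0\}$ with $k\geq 2$ then $s_1+\cdots+s_k-m\in\Delta(m)$; combined with the observation that the defining condition of $\mathscr{C}(m,A)$ for $\Delta(m)$ only ever evaluates tuples whose entries are nonzero modulo $m$ (hence nonzero), this gives the result directly.
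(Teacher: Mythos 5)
Your proof is correct. Your main route differs slightly from the paper's: you compute $\mathrm{Ap}(\Delta(m),m)=\{0,m+1,\ldots,2m-1\}$ explicitly and then invoke the criterion of Proposition~\ref{prop1}, checking that $(k-1)m+(i_1+\cdots+i_k)\geq m$ for $k\geq 2$. The paper instead proves the lemma by the one-line observation you relegate to your final sentence: $\Delta(m)\in\mathscr{S}_m$ and, for any $s_1,\ldots,s_k\in\Delta(m)\setminus\{0\}$ with $k\geq 2$, one has $s_1+\cdots+s_k-m\geq 2m-m=m$, hence $s_1+\cdots+s_k-m\in\Delta(m)$; since tuples with residues in $A$ automatically have nonzero entries, membership in $\mathscr{C}(m,A)$ follows without mentioning Apéry sets at all. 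Both arguments are sound; the Apéry-set route buys nothing here beyond a concrete illustration of Proposition~\ref{prop1} (and requires the small extra care about $w(i)=m+i$ that you correctly handle), while the direct argument is shorter and is the one the paper uses, so your ``alternative'' is in fact the paper's proof.
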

	
	Let $m\in \mathbb{N}\setminus\{0,1\}$. Then it is easy to show that $S\cap T\in \mathscr{S}_m$ for all $S,T\in\mathscr{S}_m$. Moreover, $\Delta(m)$ is the maximum of $\mathscr{S}_m$ and, if $S\in \mathscr{S}_m$ and $S\not=\Delta(m)$, then $S\cup \{\mathrm{F}(S)\} \in \mathscr{S}_m$. From all this, we conclude the following result.
	
	\begin{lemma}\label{lem5}
		Let $m\in \mathbb{N}\setminus\{0,1\}$. Then $\mathscr{S}_m$ is a Frobenius pseudo-variety with $\Delta(m)$ as maximum element.
	\end{lemma}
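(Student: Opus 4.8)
The plan is to verify directly the three defining conditions of a Frobenius pseudo-variety for $\mathscr{S}_m$, taking $\Delta(m)$ as the candidate maximum; in effect this just makes precise the observations recorded in the paragraph preceding the statement. I would first note that the family is non-empty, since $\Delta(m)\in\mathscr{S}_m$: it is closed under addition, contains $0$, has finite complement $\{1,\ldots,m-1\}$, and has least positive element $m$.

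To obtain the maximum (condition (1)), I would argue that any $S\in\mathscr{S}_m$ satisfies $S\subseteq\Delta(m)$: indeed $\mathrm{m}(S)=m$ forces $1,\ldots,m-1\notin S$, so $S\subseteq\{0,m,\to\}=\Delta(m)$. Hence $\Delta(m)$ is the largest element of $\mathscr{S}_m$ under inclusion.

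For condition (2), given $S,T\in\mathscr{S}_m$ I would check that $S\cap T$ is again a numerical semigroup (it contains $0$, is closed under addition as an intersection of submonoids of $(\mathbb{N},+)$, and has finite complement $(\mathbb{N}\setminus S)\cup(\mathbb{N}\setminus T)$) and that its multiplicity is $m$, because $m\in S\cap T$ while none of $1,\ldots,m-1$ lies in $S$ or in $T$.

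The only step with any subtlety is condition (3). For $S\in\mathscr{S}_m$ with $S\neq\Delta(m)$, set $f=\mathrm{F}(S)$; since $S\subsetneq\Delta(m)$ there is an integer $\geq m$ missing from $S$, and as $m\in S$ this gives $f>m$. I would then verify that $S\cup\{f\}$ is a numerical semigroup — it contains $0$, its complement $(\mathbb{N}\setminus S)\setminus\{f\}$ is finite, and closure under addition reduces to sums involving $f$, where $f+0=f$, and $f+s>\mathrm{F}(S)$ forces $f+s\in S$ for every $s\in S\setminus\{0\}$, and likewise $2f\in S$ — and that its multiplicity is still $m$ because $f>m$. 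The key fact being used is that $\mathrm{F}(S)$ is the \emph{largest} gap of $S$; everything else is routine, and no genuine obstacle arises. This lemma will serve as the base case for the later argument that each $\mathscr{C}(m,A)$ is a Frobenius pseudo-variety.
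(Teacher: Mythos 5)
Your proof is correct and takes essentially the same route as the paper, which records the three verifications informally in the paragraph preceding the lemma and gives no separate proof block; you simply fill in the straightforward details (non-emptiness, $\Delta(m)$ being an upper bound, closure under intersection, and the adjunction $S\cup\{\mathrm{F}(S)\}$ using $\mathrm{F}(S)>m$).
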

	
	\begin{proposition}\label{prop6}
		$\mathscr{C}(m,A)$ is a Frobenius pseudo-variety.
	\end{proposition}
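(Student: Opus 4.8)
\emph{Proof plan.} The plan is to verify the three defining conditions of a Frobenius pseudo-variety for $\mathscr{C}(m,A)$, taking Lemmas~\ref{lem4} and~\ref{lem5} as given. By Lemma~\ref{lem4} the family $\mathscr{C}(m,A)$ is non-empty. Since $\mathscr{C}(m,A)\subseteq\mathscr{S}_m$, since $\Delta(m)$ is the maximum of $\mathscr{S}_m$ by Lemma~\ref{lem5}, and since $\Delta(m)\in\mathscr{C}(m,A)$ by Lemma~\ref{lem4}, the element $\Delta(m)$ is also the maximum of $\mathscr{C}(m,A)$; this gives condition~(1). For condition~(2), I would take $S,T\in\mathscr{C}(m,A)$; Lemma~\ref{lem5} already gives $S\cap T\in\mathscr{S}_m$, so only the pattern condition remains: if $(s_1,\dots,s_k)\in(S\cap T)^k$ and $(s_1\bmod m,\dots,s_k\bmod m)\in A$, then applying the definition of $\mathscr{C}(m,A)$ to $S$ and to $T$ separately yields $s_1+\cdots+s_k-m\in S$ and $s_1+\cdots+s_k-m\in T$, hence $s_1+\cdots+s_k-m\in S\cap T$.

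Condition~(3) is where the real work lies. I would fix $S\in\mathscr{C}(m,A)$ with $S\neq\Delta(m)$ and put $S'=S\cup\{\mathrm{F}(S)\}$; Lemma~\ref{lem5} gives $S'\in\mathscr{S}_m$, so once again only the pattern condition must be checked. So take $(s_1,\dots,s_k)\in(S')^k$ with $(s_1\bmod m,\dots,s_k\bmod m)\in A$, and recall that $k\ge 2$ and $m\nmid s_i$ for every $i$. If each $s_i$ belongs to $S$, then $s_1+\cdots+s_k-m\in S\subseteq S'$ because $S\in\mathscr{C}(m,A)$. Otherwise some coordinate equals $\mathrm{F}(S)$; after relabelling, assume $s_1=\mathrm{F}(S)$. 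I would then use two observations: first, $S\neq\Delta(m)$ forces $\mathrm{F}(S)>m$ (if $\mathrm{F}(S)\le m$ then, since $m\in S$ and $1,\dots,m-1\notin S$, one would get $S=\Delta(m)$); second, each of $s_2,\dots,s_k$ is at least $m+1$, since such an $s_j$ is either $\mathrm{F}(S)>m$ or else a nonzero, non-multiple-of-$m$ element of $S$, hence strictly larger than the multiplicity $m$. Consequently
\[
s_1+\cdots+s_k-m=\mathrm{F}(S)+\bigl(s_2+\cdots+s_k-m\bigr)\ \ge\ \mathrm{F}(S)+1\ >\ \mathrm{F}(S),
\]
so this value lies in $S\subseteq S'$, which would establish condition~(3) and complete the proof.

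The hard part will be condition~(3): the issue is to understand what happens when $\mathrm{F}(S)$ is substituted into the pattern, and the point is that, because $k\ge 2$ and all remaining arguments are non-multiples of $m$ that are at least $m+1$, the output is pushed strictly above $\mathrm{F}(S)$ and is therefore already in $S$. The case $k=2$ is not exceptional here, since the single remaining argument $s_2\ge m+1$ already gives $s_2-m\ge 1$. Everything else reduces to the closure properties of $\mathscr{S}_m$ recorded in Lemma~\ref{lem5} together with the elementwise description of $\mathscr{C}(m,A)$; one could alternatively phrase conditions~(2) and~(3) through the Apéry-set characterisation of Proposition~\ref{prop1}, but the direct argument seems cleaner.
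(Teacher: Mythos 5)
Your proof is correct and follows essentially the same route as the paper: verify the three pseudo-variety axioms directly, noting that for condition~(3) only the case when $\mathrm{F}(S)$ appears among the evaluation points needs care, and there the output $s_1+\cdots+s_k-m$ exceeds $\mathrm{F}(S)$ and hence lies in $S\cup\{\mathrm{F}(S)\}$. You supply a bit more detail than the paper (the bounds $\mathrm{F}(S)>m$ and $s_j\ge m+1$) to justify the inequality, but the underlying argument is identical.
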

	
	\begin{proof}
		From Lemmas~\ref{lem4} and \ref{lem5}, we have that $\Delta(m)$ is the maximum of $\mathscr{C}(m,A)$. It is also easy to see that, if $S,T \in \mathscr{C}(m,A)$, then $S\cap T\in \mathscr{C}(m,A)$. In order to finish the proof, let us see that, if $S\in \mathscr{C}(m,A)$ and $S\not=\Delta(m)$, then $S\cup \{\mathrm{F}(S)\} \in \mathscr{C}(m,A)$. For that, we have to show that, if $s_1,\ldots,s_k\in S\cup \{\mathrm{F}(S)\}$ and $(s_1 \bmod m, \ldots, s_k \bmod m) \in A$, then $s_1+\cdots+s_k-m\in S\cup  \{\mathrm{F}(S)\}$. In effect, if $\mathrm{F}(S)\notin\{s_1,\ldots,s_k\}$, then the result is true because $S\in \mathscr{C}(m,A)$. On the other hand, if $\mathrm{F}(S)\in\{s_1,\ldots,s_k\}$, then $s_1+\cdots+s_k-m\geq \mathrm{F}(S)$ and, consequently, $s_1+\cdots+s_k-m \in S\cup  \{\mathrm{F}(S)\}$.
	\end{proof}
	
	Our next purpose in this section is to show an algorithm that allows us to build all the elements of $\mathscr{C}(m,A)$ that have a fixed genus. To do that, we use the concept of rooted tree.
	
	A \emph{graph} $G$ is a pair $(V,E)$ where $V$ is a non-empty set (whose elements are called \emph{vertices} of $G$) and $E$ is a subset of $\{(v,w) \in V \times V \mid v \neq w\}$ (whose elements are called \emph{edges} of $G$). A \emph{path (of length $n$) connecting the vertices $x$ and $y$ of $G$} is a sequence of different edges $(v_0,v_1),(v_1,v_2),\ldots,(v_{n-1},v_n)$ such that $v_0=x$ and $v_n=y$.
	
	We say that a graph $G$ is a \emph{(rooted) tree} if there exists a vertex $r$ (known as the \emph{root} of $G$) such that, for any other vertex $x$ of $G$, there exists a unique path connecting $x$ and $r$. If there exists a path connecting $x$ and $y$, then we say that $x$ is a \emph{descendant} of $y$. In particular, if $(x,y)$ is an edge of the tree, then we say that $x$ is a \emph{child} of $y$. (See \cite{rosen}.)
	
	We define the graph $\mathrm{G}\big(\mathscr{C}(m,A)\big)$ in the following way: $\mathscr{C}(m,A)$ is the set of vertices and $(S,S')\in \mathscr{C}(m,A) \times \mathscr{C}(m,A)$ is an edge if $S\cup \{\mathrm{F}(S)\}=S'$. The following result is a consequence of Lemma~4.2 and Theorem~4.3 of \cite{ampa}.
	
	\begin{theorem}\label{thm7}
		$\mathrm{G}\big(\mathscr{C}(m,A)\big)$ is a tree with root $\Delta(m)$. Moreover, the children set of $S\in \mathscr{C}(m,A)$ is $\big\{S \setminus \{x\} \mid x \in \mathrm{msg}(S), \ S\setminus\{x\} \in \mathscr{C}(m,A), \ x>\mathrm{F}(S) \big\}.$
	\end{theorem}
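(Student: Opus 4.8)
The plan is to obtain both assertions from the general theory of Frobenius pseudo-varieties in \cite{ampa}, using that Proposition~\ref{prop6} already identifies $\mathscr{C}(m,A)$ as a Frobenius pseudo-variety with maximum $\Delta(m)$. Lemma~4.2 of \cite{ampa} states that, for an arbitrary Frobenius pseudo-variety $\mathcal{P}$, the graph with vertex set $\mathcal{P}$ and edges $(S, S\cup\{\mathrm{F}(S)\})$ is a tree rooted at $\max(\mathcal{P})$, and Theorem~4.3 of \cite{ampa} describes its children; I would simply invoke these with $\mathcal{P}=\mathscr{C}(m,A)$ and $\max(\mathcal{P})=\Delta(m)$.

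For a self-contained argument I would proceed as follows. First, to see the tree structure, note that every $S\in\mathscr{C}(m,A)$ with $S\neq\Delta(m)$ has, by condition~(3) in the definition of a Frobenius pseudo-variety, a well-defined parent $P(S):=S\cup\{\mathrm{F}(S)\}\in\mathscr{C}(m,A)$, and this is the only edge of the graph having $S$ as first coordinate (while $\Delta(m)$ has none). Since $\mathrm{g}(P(S))=\mathrm{g}(S)-1$, the sequence $S, P(S), P(P(S)),\ldots$ is strictly decreasing in genus, so it stabilises after finitely many steps, necessarily at the maximum $\Delta(m)$; this yields a path from every vertex to $\Delta(m)$. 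For uniqueness I would observe that any path starting at $S$ is forced edge by edge (each vertex has out-degree at most one), and that the genus strictly drops along every edge, so the graph has no cycles; hence the path connecting $S$ and the root is unique.

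Next, for the children of $S$, recall that $S'$ is a child of $S$ precisely when $S'\in\mathscr{C}(m,A)$ and $S'\cup\{\mathrm{F}(S')\}=S$. If $S'$ is such a child and $x:=\mathrm{F}(S')$, then $S'=S\setminus\{x\}$; I would then argue that $x\in\mathrm{msg}(S)$ (a decomposition $x=a+b$ with $a,b\in S\setminus\{0\}$ would force $a,b\in S'$, whence $x=a+b\in S'$, a contradiction) and that $x>\mathrm{F}(S)$ (because $x\in S$ while $\mathrm{F}(S)\notin S$, and $\mathrm{F}(S')\geq\mathrm{F}(S)$ as $S'\subseteq S$). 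Conversely, if $x\in\mathrm{msg}(S)$, $x>\mathrm{F}(S)$ and $S\setminus\{x\}\in\mathscr{C}(m,A)$, then every integer larger than $x$ exceeds $\mathrm{F}(S)$ and so lies in $S\setminus\{x\}$, giving $\mathrm{F}(S\setminus\{x\})=x$ and $(S\setminus\{x\})\cup\{x\}=S$; thus $S\setminus\{x\}$ is a child of $S$. This establishes the claimed children set.

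I do not anticipate a genuine obstacle here, since the statement is essentially the specialisation of the abstract results of \cite{ampa}. The only points deserving care are verifying that the iterated parent map terminates exactly at $\Delta(m)$ (handled by the genus drop together with condition~(3)) and checking, in the children characterisation, that removing a minimal generator above $\mathrm{F}(S)$ produces a numerical semigroup whose Frobenius number is that very generator — the hypothesis $S\setminus\{x\}\in\mathscr{C}(m,A)$ then guarantees the multiplicity stays equal to $m$ (so that, for instance, $x=m$ is correctly excluded when $S=\Delta(m)$).
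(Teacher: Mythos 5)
Your proposal matches the paper exactly: the paper's entire proof is the single sentence that the theorem follows from Lemma~4.2 and Theorem~4.3 of \cite{ampa} applied to the Frobenius pseudo-variety $\mathscr{C}(m,A)$, which is precisely your first paragraph. The self-contained verification you add (the parent map $S\mapsto S\cup\{\mathrm{F}(S)\}$ strictly drops genus, so iterates terminate at $\Delta(m)$; a child of $S$ is exactly $S\setminus\{x\}$ for $x\in\mathrm{msg}(S)$ with $x>\mathrm{F}(S)$ and $S\setminus\{x\}\in\mathscr{C}(m,A)$) is correct and simply fills in detail that the paper delegates to the cited reference.
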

	
	Let $S$ be a numerical semigroup and let $x\in S$. Then it is clear that $S\setminus \{x\}$ is a numerical semigroup if and only if $x\in\mathrm{msg}(S)$. Moreover, let us observe that, if $x\in\mathrm{msg}(S)$ and $m\in S\setminus\{0,x\}$, then
	\[\mathrm{Ap}(S\setminus \{x\},m) = \big(\mathrm{Ap}(S,m)\setminus\{x\}\big)\cup\{x+m\}.\]
	
	In the following result we characterise the children of $S\in \mathscr{C}(m,A)$.
	
	\begin{proposition}\label{prop8}
		Let $S\in \mathscr{C}(m,A)$, $\mathrm{Ap}(S,m)=\{w(0),w(1),\ldots,w(m-1)\}$ and $x\in\mathrm{msg}(S)\setminus\{m\}$. Then $S\setminus\{x\}\in \mathscr{C}(m,A)$ if and only if $w(i_1)+\cdots+w(i_k)\neq m+x$ for all $(i_1,\ldots,i_k)\in A$.
	\end{proposition}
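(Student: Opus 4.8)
The plan is to translate the membership $S\setminus\{x\}\in\mathscr{C}(m,A)$ into the criterion of Proposition~\ref{prop1} applied to the numerical semigroup $S\setminus\{x\}$, using the explicit Ap\'ery-set update formula recorded just before the statement. First I would fix the preliminaries. Since $x\in\mathrm{msg}(S)$, the set $S\setminus\{x\}$ is a numerical semigroup; as $x\neq m=\mathrm{m}(S)$ we have $x>m$, so $\mathrm{m}(S\setminus\{x\})=m$ and $m\in S\setminus\{0,x\}$. Moreover $x-m\notin S$ (otherwise $x=(x-m)+m$ would be a sum of two nonzero elements of $S$, contradicting $x\in\mathrm{msg}(S)$); in particular $x$ is not a multiple of $m$, so $j:=x\bmod m\in\{1,\dots,m-1\}$ and $x$ is the least element of $S$ congruent to $j$ modulo $m$, i.e.\ $x=w(j)$. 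The update formula then gives $\mathrm{Ap}(S\setminus\{x\},m)=\{w'(0),\dots,w'(m-1)\}$ with $w'(j)=w(j)+m=x+m$ and $w'(i)=w(i)$ for every $i\neq j$.

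Next, given a tuple $(i_1,\dots,i_k)\in A$, let $r$ denote the number of indices $l\in\{1,\dots,k\}$ with $i_l=j$. The key identity is
\[ w'(i_1)+\cdots+w'(i_k)-m \;=\; \big(w(i_1)+\cdots+w(i_k)-m\big)+rm, \]
so $w'(i_1)+\cdots+w'(i_k)-m=y+rm$ with $y:=w(i_1)+\cdots+w(i_k)-m$. Since $S\in\mathscr{C}(m,A)$, Proposition~\ref{prop1} gives $y\in S$, hence $y+rm\in S$ for all $r\ge0$; so by Proposition~\ref{prop1} applied to $S\setminus\{x\}$, the assertion $S\setminus\{x\}\in\mathscr{C}(m,A)$ is equivalent to: $y+rm\neq x$ for every $(i_1,\dots,i_k)\in A$. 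To prove ``$\Leftarrow$'', assume $w(i_1)+\cdots+w(i_k)\neq m+x$ for all tuples in $A$ and fix one tuple. If $r=0$, then $y+rm=y=w(i_1)+\cdots+w(i_k)-m\neq x$ by hypothesis. If $r\ge1$, first note $y>0$ (each $w(i_l)\ge m+1$ because $i_l\neq0$, and $k\ge2$); were $y+rm=x=w(j)$, then $y$ would be an element of $S$ congruent to $j$ modulo $m$ with $0<y\le x-m<x$, contradicting the minimality of $w(j)$. Either way $y+rm\neq x$, so $S\setminus\{x\}\in\mathscr{C}(m,A)$. For ``$\Rightarrow$'' I argue by contraposition: suppose $w(i_1)+\cdots+w(i_k)=m+x$ for some tuple in $A$. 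Then $r=0$, for if $r\ge1$ one summand equals $w(j)=x$ and among the remaining $k-1\ge1$ summands at least one is $\ge m+1$ (a $w(i_l)$ with $i_l\neq j$, or a further copy of $x\ge m+1$), whence $w(i_1)+\cdots+w(i_k)\ge x+(m+1)>m+x$, a contradiction. Thus $r=0$ and $w'(i_1)+\cdots+w'(i_k)-m=y=x\notin S\setminus\{x\}$, so $S\setminus\{x\}\notin\mathscr{C}(m,A)$ by Proposition~\ref{prop1}.

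The only subtle point is the case $r\ge1$ in the ``$\Leftarrow$'' direction: one must verify that the additional multiples of $m$ arising from replacing $w(j)$ by $w(j)+m$ can never equal $x$ itself. This is exactly where $x=w(j)$ (equivalently, $x-m\notin S$) and the elementary bound $w(i_l)\ge m+1$ are used; everything else is routine bookkeeping with Proposition~\ref{prop1} and the Ap\'ery-set update formula.
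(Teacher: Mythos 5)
Your proof is correct and follows essentially the same route as the paper's: reduce via Proposition~\ref{prop1} applied to $S\setminus\{x\}$ using the Ap\'ery-set update formula, and determine exactly when $w'(i_1)+\cdots+w'(i_k)-m$ can equal $x$. The only cosmetic difference is that, in the sufficiency direction, where the paper (implicitly) rules out a summand equal to $x+m$ by a direct size bound on the total, you handle the subcase $r\geq 1$ by invoking the minimality of $w(j)=x$ in its residue class; both arguments are sound and the preliminary observations you spell out ($x=w(j)$, $\mathrm{m}(S\setminus\{x\})=m$, etc.) are correct.
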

	
	\begin{proof}
		Let us suppose that $\mathrm{Ap}(S\setminus \{x\},m) = \big(\mathrm{Ap}(S,m)\setminus\{x\}\big)\cup\{x+m\}=\{w'(0),w'(1),\ldots,w'(m-1)\}$.
		
		\textit{(Necessity.)} If $w(i_1)+\cdots+w(i_k) = m+x$, then $m+x \notin \{w(i_1),\ldots,w(i_k)\}$ and, therefore, $w(i_1)=w'(i_1),\ldots,w(i_k)=w'(i_k)$. Moreover, $w(i_1)+\cdots+w(i_k)-m=x\notin S\setminus\{x\}$ and, consequently, $S\setminus\{x\} \notin \mathscr{C}(m,A)$.
		
		\textit{(Sufficiency.)} In order to prove that $S\setminus\{x\}\in\mathscr{C}(m,A)$, by Proposition~\ref{prop1}, it is enough to see that, if $(i_1,\ldots,i_k)\in A$, then $w'(i_1)+\cdots+w'(i_k)-m\in S\setminus\{x\}$.
		
		Since $S\in \mathscr{C}(m,A)$, we easily deduce that $w'(i_1)+\cdots+w'(i_k)-m\in S$. Now, if $w'(i_1)+\cdots+w'(i_k)-m\notin S\setminus\{x\}$, then $w'(i_1)+\cdots+w'(i_k)=m+x$. Thus,  $w(i_1)=w'(i_1),\ldots,w(i_k)=w'(i_k)$ and $w(i_1)+\cdots+w(i_k)=m+x$, where the last equality is in contradiction with $S\in \mathscr{C}(m,A)$.
	\end{proof}
	
	Let us observe that a tree can be built in a recurrent way starting from its root and connecting each vertex with its children. Let us also observe that the elements of $\mathscr{C}(m,A)$ with genus equal to $g+1$ are precisely the children of the elements of $\mathscr{C}(m,A)$ with genus equal to $g$.
	
	We are ready to show the above announced algorithm.
	
	\begin{algorithm}\label{alg9}
		\mbox{ } \par	
		\noindent $\ $ INPUT: A positive integer $g$. \par
		\noindent $\ $ OUTPUT: $\{S \in \mathscr{C}(m,A) \mid \mathrm{g}(S)=g \}$.
		\vspace{-3pt}
		\begin{itemize}
			\setlength\itemsep{0pt}
			\item[(1)] If $g<m-1$, return $\emptyset$.
			\item[(2)] $X=\{\Delta(m)\}$ and $i=m-1$.
			\item[(3)] If $i=g$, return $X$.
			\item[(4)] For each $S\in X$, compute the set \[\mathscr{B}_S=\big\{x\in\mathrm{msg}(S) \mid x>\mathrm{F}(S), \ x\not=m, \ S\setminus\{x\}\in \mathscr{C}(m,A) \big\}.\]
			\item[(5)] If $\ \bigcup_{S\in X} \mathscr{B}_S=\emptyset$, return $\emptyset$.
			\item[(6)] $X=\bigcup_{S\in X} \big\{S\setminus\{x\} \mid x \in \mathscr{B}_S \big\}$, $i=i+1$, and go to (3).
		\end{itemize}
	\end{algorithm}
	
	For computing item (4) in the above algorithm, we use Proposition~\ref{prop8} and the following Lemma~\ref{lem10}, which is a reformulation of Corollary~18 of \cite{frases}, and is useful to obtain the minimal system of generators of $S\setminus\{x\}$ starting from the minimal system of generators of $S$.
	
	\begin{lemma}\label{lem10}
		Let $S$ be a numerical semigroup with $\mathrm{msg}(S)=\{n_1<n_2<\cdots<n_e\}$. If $i \in \{2,\ldots,e\}$ and $n_i>\mathrm{F}(S)$, then
		\[ \mathrm{msg}(S \setminus \{n_i\})= \left\{ \begin{array}{l}
			\{n_1,\ldots,n_{e}\} \setminus \{n_i\}, \quad \mbox{if there exists } j \in \{2,\ldots,i-1\} \\ \hspace{3.4cm} \mbox{such that } n_i+n_1-n_j \in S; \\[2mm]
			\big(\{n_1,\ldots,n_{e}\} \setminus \{n_i\}\big) \cup \{n_i+n_1\}, \quad \mbox{in other case.}
		\end{array} \right.\]
	\end{lemma}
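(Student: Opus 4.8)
\textit{Approach.} The plan is to argue directly about the numerical semigroup $T:=S\setminus\{n_i\}$ (it is one because $n_i\in\mathrm{msg}(S)$), keeping in mind that $n_i>\mathrm{F}(S)$ forces $\mathrm{F}(T)=n_i$ and, since $i\geq 2$, $\mathrm{m}(T)=n_1$. Write $S'=\langle\{n_1,\ldots,n_e\}\setminus\{n_i\}\rangle$, so $S'\subseteq T\subseteq S$. I would first collect the routine facts: each $n_k$ with $k\neq i$ remains a minimal generator of $T$ (it lies in $T$ and is not a sum of two nonzero elements of $S$, hence not of $T$), and for the same reason $\mathrm{msg}(S')=\{n_1,\ldots,n_e\}\setminus\{n_i\}$; moreover every element of $S$ smaller than $n_i$ already belongs to $S'$, because any representation involving $n_i$ has value at least $n_i$. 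In particular $n_i=\min(S\setminus S')$.

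\textit{Core step.} The key claim is: if $s\in S\setminus S'$ and $s\neq n_i$, then $s\geq n_i+n_1$ and $s-n_1\in S\setminus S'$. Indeed $s\notin S'$ means every representation of $s$ in $S$ uses $n_i$, so $s=n_i+r$ with $r=s-n_i\in S\setminus\{0\}$, hence $r\geq n_1$ and $s\geq n_i+n_1$; then $s-n_1\geq n_i>\mathrm{F}(S)$, so $s-n_1\in S$, and if $s-n_1$ lay in $S'$ then $s=n_1+(s-n_1)$ would too. Iterating (subtract $n_1$ repeatedly), the value stays in $S\setminus S'$ and strictly decreases; an element of $S\setminus S'$ that exceeds $n_i$ is already $\geq n_i+n_1$, so the descent cannot jump over $n_i$ and must land on it. Hence every element of $S\setminus S'$ has the form $n_i+kn_1$ with $k\in\mathbb{N}$. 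This is the place where $n_i>\mathrm{F}(S)$ is genuinely needed, and I expect controlling this descent (staying inside $S\setminus S'$ and not overshooting $n_i$) to be the only delicate point.

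\textit{The dichotomy.} If $n_i+n_1\in S'$, then $S\setminus S'=\{n_i\}$: otherwise the least element $s$ of $(S\setminus S')\setminus\{n_i\}$ equals $n_i+kn_1$ with $k\geq 1$, and $s-n_1\in S\setminus S'$ by the claim contradicts minimality unless $s-n_1=n_i$, i.e.\ $s=n_i+n_1\in S'$, absurd. So $T=S'$ and $\mathrm{msg}(T)=\{n_1,\ldots,n_e\}\setminus\{n_i\}$. If instead $n_i+n_1\notin S'$, then every $s\in T\setminus S'$ is $(n_i+n_1)+(k-1)n_1$, so $T=\langle(\{n_1,\ldots,n_e\}\setminus\{n_i\})\cup\{n_i+n_1\}\rangle$; and $n_i+n_1$ is a minimal generator of $T$, since in a putative decomposition $n_i+n_1=a+b$ with $0<a\leq b$ in $T$ one has $a\leq n_i$, hence $a<n_i$ (as $n_i\notin T$), hence $a\in S'$, forcing $b=n_i+n_1-a\in S\setminus S'$ with $b<n_i+n_1$, which the claim forbids. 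Combining with the routine facts gives $\mathrm{msg}(T)=(\{n_1,\ldots,n_e\}\setminus\{n_i\})\cup\{n_i+n_1\}$.

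\textit{Translating the condition.} It remains to match the condition in the statement with ``$n_i+n_1\in S'$''. If $n_i+n_1-n_j\in S$ for some $j\in\{2,\ldots,i-1\}$, then $n_i+n_1-n_j<n_i$ (because $n_j>n_1$), so it lies in $S'$, and $n_i+n_1=n_j+(n_i+n_1-n_j)\in S'$. Conversely, if $n_i+n_1\in S'$, write $n_i+n_1=\sum_{k\neq i}\mu_k n_k$; the sum has at least two terms (a single term would make $n_i+n_1$ a minimal generator of $S$, impossible), and some $n_l$ with $l\geq 2$ appears (else $n_i$ would be a proper multiple of $n_1$, contradicting $n_i\in\mathrm{msg}(S)$); such an $l$ satisfies $l\leq i-1$, for $l>i$ would make $n_i+n_1-n_l$ a nonnegative element of $S$ strictly below $\mathrm{m}(S)=n_1$, hence $0$, so $n_i+n_1=n_l\in\mathrm{msg}(S)$, absurd. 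Then $n_i+n_1-n_l\in S$ with $l\in\{2,\ldots,i-1\}$, as wanted.
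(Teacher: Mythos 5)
Your argument is correct and self-contained, which is worth noting because the paper itself gives no proof of Lemma~\ref{lem10}: it merely states that it is a reformulation of Corollary~18 of \cite{frases}. So there is no in-paper proof to compare against; the natural proof suggested by the surrounding text (the remark just before the lemma) would go through the identity $\mathrm{Ap}(S\setminus\{x\},m)=(\mathrm{Ap}(S,m)\setminus\{x\})\cup\{x+m\}$, whereas you instead analyse the monoid $S'=\langle\{n_1,\ldots,n_e\}\setminus\{n_i\}\rangle$ and the set $S\setminus S'$ directly. Both routes are reasonable; yours has the virtue of never mentioning Ap\'ery sets and of isolating the single structural fact that drives everything (every element of $S\setminus S'$ is of the form $n_i+kn_1$), from which the dichotomy and the translation of the hypothesis both fall out.

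A few small checks that the reader must supply but that do hold: $n_i\notin S'$ because a representation $n_i=\sum_{k\ne i}\mu_k n_k$ would express the minimal generator $n_i$ as $0$, as a single $n_k$, or as a sum of two nonzero elements of $S$, all impossible; in the decomposition $n_i+n_1=a+b$ with $0<a\le b$, the bound $a\le (n_i+n_1)/2<n_i$ already forces $a<n_i$ without invoking $n_i\notin T$, but your version is equally fine; and in the last paragraph, the index $l$ you produce automatically satisfies $l\ne i$ because the sum ranges over $k\ne i$, and $l\le i-1$ is what the argument with $\mathrm{m}(S)=n_1$ delivers, so $l\in\{2,\ldots,i-1\}$ as needed (in particular this shows the first alternative is impossible when $i=2$, consistent with $\{2,\ldots,i-1\}=\emptyset$ there). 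I see no gaps.
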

	
	We illustrate the functioning of Algorithm~\ref{alg9} with an example.
	
	\begin{example}\label{exmp11}
		Let us compute all the elements of $\mathscr{C}(5,\{(1,1),(1,2)\})$ with genus equal to $6$.
		\begin{itemize}
			\setlength\itemsep{0pt}
			\item $X=\{\langle 5,6,7,8,9 \rangle\}$ and $i=4$.
			\item $B_{\langle 5,6,7,8,9 \rangle} = \{6,9\}$.
			\item $X=\{\langle 5,7,8,9,11 \rangle, \langle 5,6,7,8 \rangle\}$ and $i=5$.
			\item $B_{\langle 5,7,8,9,11 \rangle} = \{7,8,9,11\}$ and $B_{\langle 5,6,7,8 \rangle} = \emptyset$.
			\item $X=\{\langle 5,8,9,11,12 \rangle, \langle 5,7,9,11,13\rangle, \langle 5,7,8,11\rangle, \langle 5,7,8,9\rangle \}$ and $i=6$.
			\item Return $\{\langle 5,8,9,11,12 \rangle, \langle 5,7,9,11,13\rangle, \langle 5,7,8,11\rangle, \langle 5,7,8,9\rangle \}$.
		\end{itemize}
	\end{example}
	
	Taking advantage of the above example, we finish this section building the first three levels of the tree $\mathrm{G}\big(\mathscr{C}(5,\{(1,1),(1,2)\})\big)$.
	\begin{center}
		\begin{picture}(258,65)
			
			\put(154,56){$\langle 5,6,7,8,9 \rangle$}
			
			\put(141,44){\scriptsize 6} \put(210,44){\scriptsize 9}
			\put(130,38){\vector(3,1){42}} \put(224,38){\vector(-3,1){42}}
			\put(94,28){$\langle 5,7,8,9,11 \rangle$} \put(211,28){$\langle 5,6,7,8 \rangle$}
			
			\put(59,16){\scriptsize 7} \put(100,14){\scriptsize 8} \put(141,14){\scriptsize 9} \put(181,16){\scriptsize 11}
			\put(45,10){\vector(4,1){56}} \put(100,10){\vector(4,3){19}} \put(145,10){\vector(-4,3){19}} \put(200,10){\vector(-4,1){56}}
			\put(7,0){$\langle 5,8,9,11,12 \rangle$} \put(72,0){$\langle 5,7,9,11,13 \rangle$} \put(137,0){$\langle 5,7,8,11 \rangle$} \put(188,0){$\langle 5,7,8,9 \rangle$}
		\end{picture}
	\end{center}
	Observe that the number which appear next to each edge $(S',S)$ is the minimal generator $x$ of $S$ such that $S'=S \setminus \{x\}$.

	\section{Second-level numerical semigroups}\label{2-level-ns}
	
	We say that a numerical semigroup $S$ is of \textit{second-level} if $x+y+z-\mathrm{m}(S)\in S$ for all $(x,y,z)\in (S\setminus\{0\})^3$. We denote by $\mathscr{L}_{2,m}$ the set of all the second-level numerical semigroups with multiplicity equal to $m$.
	
	\begin{proposition}\label{prop12}
		Let $S$ be a numerical semigroup with minimal system of generators given by $\{m=n_1<n_2<\cdots<n_e\}$. Then the following two conditions are equivalents.
		\begin{enumerate}
			\item $S\in \mathscr{L}_{2,m}$.
			\item If $(i,j,k)\in \{2,\ldots,e\}^3$, then $n_i+n_j+n_k-m\in S$.
		\end{enumerate}
	\end{proposition}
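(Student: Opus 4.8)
The plan is to prove both implications directly, working with representations of elements of $S$ as non-negative integer combinations of the minimal generators $n_1,\dots,n_e$; there is no need to pass through the Apéry-set criterion of Proposition~\ref{prop1}.

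The implication $(1)\Rightarrow(2)$ is essentially immediate. For any $(i,j,k)\in\{2,\dots,e\}^3$ the generators $n_i,n_j,n_k$ all lie in $\{n_2,\dots,n_e\}\subseteq S\setminus\{0\}$ (note that $n_\ell\ge n_2>n_1=m\ge 2$ for $\ell\ge 2$), so the triple $(n_i,n_j,n_k)$ belongs to $(S\setminus\{0\})^3$ and the definition of $\mathscr{L}_{2,m}$ yields $n_i+n_j+n_k-m\in S$.

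For $(2)\Rightarrow(1)$ I would fix $x,y,z\in S\setminus\{0\}$ and show $x+y+z-m\in S$, splitting into two cases. If at least one of $x,y,z$ — say $x$ — is a multiple of $m$, I write $x=tm$ with $t\ge 1$ and observe $x+y+z-m=(t-1)m+(y+z)\in S$, using $m\in S$ and closure of $S$ under addition. If none of $x,y,z$ is a multiple of $m$, then in any expression of $x$ as a non-negative combination of $n_1=m,n_2,\dots,n_e$ some $n_i$ with $i\ge 2$ must occur with positive coefficient (otherwise $x$ would be a multiple of $m$); I pick such an index $i$ for $x$, and likewise indices $j,k\ge 2$ for $y$ and $z$. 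Writing $x=n_i+x'$, $y=n_j+y'$, $z=n_k+z'$ with $x',y',z'\in S$ (the residual combinations of minimal generators, possibly $0$), I get
\[
 x+y+z-m=(n_i+n_j+n_k-m)+x'+y'+z',
\]
and hypothesis~(2) together with closure of $S$ under addition finishes the argument.

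The only subtlety — and the closest thing to an obstacle — is the case distinction in $(2)\Rightarrow(1)$: the ``peel off one non-$m$ generator from each of $x,y,z$'' trick only works when each of $x,y,z$ actually uses a generator other than $m$, which fails precisely for multiples of $m$; that degenerate case, however, is disposed of at once by absorbing the $-m$ into the multiple of $m$. Everything else is routine closure-under-addition bookkeeping.
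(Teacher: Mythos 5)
Your proof is correct and follows essentially the same route as the paper's: the same case split on whether some $x,y,z$ is a multiple of $m$, and the same decomposition peeling off one non-$m$ generator from each element in the remaining case. The only difference is that you spell out a few steps the paper leaves as ``clear'' or ``easily deduced,'' which does no harm.
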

	
	\begin{proof}
		\textit{(1. $\Rightarrow$ 2.)} It is evident from the definition of second-level numerical semigroup.
		
		\textit{(2. $\Rightarrow$ 1.)} Let $(x,y,z) \in (S\setminus\{0\})^3$. If $0\in\{x\bmod m, y\bmod m, z\bmod m  \}$, then it is clear that $x+y+z-m\in S$. Now, if $0\notin\{x\bmod m, y\bmod m, z\bmod m  \}$, then we easily deduce that there exist $(i,j,k)\in \{2,\ldots,e\}^3$ and $(s_1,s_2,s_3)\in S^3$ such that $(x,y,z)=(n_i,n_j,n_k)+(s_1,s_2,s_3)$. Therefore, $x+y+z-m=(n_i+n_j+n_k-m)+s_1+s_2+s_3\in S$. Consequently, $S\in \mathscr{L}_{2,m}$.
	\end{proof}
	
	The above proposition allows us to easily decide whether a numerical semigroup is of second-level or not.
	
	\begin{example}\label{exmp13}
		Let us see that $S=\langle 5,7,16 \rangle \in \mathscr{L}_{2,m}$. In effect, it is clear that $\{7+7+7-5,7+7+16-5,7+16+16,16+16+16-5\} = \{16,25,34,43\} \subseteq S$. Therefore, by Proposition~\ref{prop12}, we have that $S\in \mathscr{L}_{2,m}$.
	\end{example}
	
	Now our intention is to show that $\mathscr{L}_{2,m}$ is a modular Frobenius pseudo-variety.
	
	\begin{proposition}\label{prop14}
		Let $m\in \mathbb{N}\setminus\{0,1\}$. Then $\mathscr{L}_{2,m}=\mathscr{C}(m,\{1,\ldots,m-1\}^3)$.
	\end{proposition}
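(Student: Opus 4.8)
The plan is to prove the two inclusions separately, and the point to keep in mind is that the set $A=\{1,\ldots,m-1\}^3$ is exactly the set of triples of residues \emph{none} of which is $0$, so membership in $\mathscr{C}(m,A)$ only constrains triples $(s_1,s_2,s_3)\in S^3$ with $s_i\not\equiv 0\pmod m$ for all $i$, whereas membership in $\mathscr{L}_{2,m}$ constrains \emph{all} triples in $(S\setminus\{0\})^3$.

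For the inclusion $\mathscr{L}_{2,m}\subseteq\mathscr{C}(m,\{1,\ldots,m-1\}^3)$ I would argue directly: if $S\in\mathscr{L}_{2,m}$ and $(s_1,s_2,s_3)\in S^3$ satisfies $(s_1\bmod m,s_2\bmod m,s_3\bmod m)\in\{1,\ldots,m-1\}^3$, then in particular $s_i\bmod m\neq 0$, hence $s_i\neq 0$ for each $i$, so $(s_1,s_2,s_3)\in(S\setminus\{0\})^3$ and the definition of second-level numerical semigroup gives $s_1+s_2+s_3-m\in S$. Thus $S\in\mathscr{C}(m,\{1,\ldots,m-1\}^3)$.

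For the reverse inclusion I would take $S\in\mathscr{C}(m,\{1,\ldots,m-1\}^3)$ and an arbitrary $(x,y,z)\in(S\setminus\{0\})^3$, and split into two cases. If none of $x,y,z$ is divisible by $m$, then $(x\bmod m,y\bmod m,z\bmod m)\in\{1,\ldots,m-1\}^3$, and the defining property of $\mathscr{C}(m,\{1,\ldots,m-1\}^3)$ yields $x+y+z-m\in S$. If, on the other hand, some coordinate is divisible by $m$ — say $z=tm$ with $t\geq 1$, since $z\in S\setminus\{0\}$ and $\mathrm{m}(S)=m$ — then $z-m=(t-1)m\in S$ because $S\in\mathscr{S}_m$ contains every nonnegative multiple of $m$; hence $x+y+z-m=x+y+(z-m)$ is a sum of three elements of $S$, and closure of $S$ under addition gives $x+y+z-m\in S$. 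In either case $x+y+z-m\in S$, so $S\in\mathscr{L}_{2,m}$.

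I do not expect a genuine obstacle here: the only thing worth noticing is that the triples not covered by the condition defining $\mathscr{C}(m,A)$ (namely those having a coordinate divisible by $m$) are handled automatically, exactly as in the $(2\Rightarrow 1)$ part of the proof of Proposition~\ref{prop12}, because subtracting $m$ from a coordinate divisible by $m$ leaves an element of $S$ and reduces the identity to additive closure. One should just be slightly careful to record that every nonzero element of a semigroup of multiplicity $m$ is $\geq m$, so all the quantities involved are nonnegative.
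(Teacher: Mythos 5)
Your proof is correct and follows essentially the same approach as the paper: the reverse inclusion hinges on the same case split according to whether some coordinate of $(x,y,z)$ is divisible by $m$, with the divisible case dismissed by additive closure. The only (cosmetic) difference is that you apply the definition of $\mathscr{C}(m,\{1,\ldots,m-1\}^3)$ directly in the remaining case, whereas the paper routes both directions through the Ap\'ery set characterisation of Proposition~\ref{prop1}; your version is a touch more elementary but not a genuinely different argument.
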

	
	\begin{proof}
		Let $S\in\mathscr{L}_{2,m}$ and $\mathrm{Ap}(S,m)=\{w(0),w(1),\ldots,w(m-1)\}$. If $(i,j,k)\in\{1,\ldots,m-1\}^3$, then $(w(i),w(j),w(k))\in(S\setminus\{0\})^3$ and, therefore, $w(i)+w(j)+w(k)-m\in S$. By applying Proposition~\ref{prop1}, we have that $S\in\mathscr{C}(m,\{1,\ldots,m-1\}^3)$.
		
		In order to see the other inclusion, let $S\in \mathscr{C}(m,\{1,\ldots,m-1\}^3)$ and $(x,y,z)\in(S\setminus\{0\})^3$. Firstly, if $0\in\{x\bmod m,y\bmod m,z\bmod m\}$, then it is clear that $x+y+z-m\in S$. Secondly, if $0\notin\{x\bmod m, y\bmod m, z\bmod m  \}$, then there exist $(i,j,k)\in \{1,\ldots,m-1\}^3$ and $(p,q,r)\in \mathbb{N}^3$ such that $x=w(i)+pm$, $y=w(j)+qm$, and $z=w(k)+rm$. Thus, $x+y+z-m=(w(i)+w(j)+w(k)-m)+(p+q+r)m\in S$ and, therefore, $S\in \mathscr{L}_{2,m}$.
	\end{proof}
	
	As an immediate consequence of Propositions~\ref{prop6} and \ref{prop14}, we have the following result.
	
	\begin{corollary}\label{cor15}
		Let $m\in \mathbb{N}\setminus\{0,1\}$. Then $\mathscr{L}_{2,m}$ is a modular pseudo-Frobenius variety and, in addition, $\Delta(m)$ is the maximum of $\mathscr{L}_{2,m}$.
	\end{corollary}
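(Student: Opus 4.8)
The plan is to deduce the statement directly from the two preceding propositions, so the argument will be essentially a one-line combination. First I would invoke Proposition~\ref{prop14} to rewrite $\mathscr{L}_{2,m}$ as $\mathscr{C}(m,\{1,\ldots,m-1\}^3)$. Here it is worth noting explicitly that $\{1,\ldots,m-1\}^3$ is a finite subset of $\bigcup_{k\in\mathbb{N}\setminus\{0,1\}}\{1,\ldots,m-1\}^k$ (it sits at the single level $k=3\ge 2$), so it is a legitimate choice of the parameter set $A$, and hence $\mathscr{C}(m,\{1,\ldots,m-1\}^3)$ is genuinely a family of the form considered throughout Section~\ref{C(m,A)-pseudo-variety}.

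Next I would apply Proposition~\ref{prop6}, which asserts that $\mathscr{C}(m,A)$ is a Frobenius pseudo-variety for every admissible $A$; specialising to $A=\{1,\ldots,m-1\}^3$ gives at once that $\mathscr{L}_{2,m}$ is a Frobenius pseudo-variety. Since it is realised in the form $\mathscr{C}(m,A)$ for a suitable $A$, it is by definition a modular Frobenius pseudo-variety in the sense fixed in the Introduction.

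Finally, for the assertion about the maximum: the proof of Proposition~\ref{prop6}, which relies on Lemmas~\ref{lem4} and \ref{lem5}, shows that $\Delta(m)$ is the maximum element of $\mathscr{C}(m,A)$; applying this with $A=\{1,\ldots,m-1\}^3$ yields $\Delta(m)=\max(\mathscr{L}_{2,m})$. Alternatively, one can see this without referring back to that proof: Lemma~\ref{lem4} (or the trivial observation that $x+y+z-m\in\Delta(m)$ whenever $x,y,z\in\Delta(m)\setminus\{0\}$) gives $\Delta(m)\in\mathscr{L}_{2,m}$, and every element of $\mathscr{S}_m$ — hence every element of $\mathscr{L}_{2,m}\subseteq\mathscr{S}_m$ — is contained in $\Delta(m)$, so $\Delta(m)$ dominates the whole family with respect to inclusion.

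I do not anticipate any real obstacle: all the substantive work has been front-loaded into Propositions~\ref{prop14} and \ref{prop6}, and the corollary is a routine specialisation. The only point that deserves a moment's care is the bookkeeping check that $\{1,\ldots,m-1\}^3$ does fall within the class of parameter sets for which those two propositions were stated, which it plainly does.
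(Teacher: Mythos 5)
Your proposal is correct and matches the paper's argument: the corollary is stated there as an immediate consequence of Propositions~\ref{prop6} and \ref{prop14}, which is exactly the specialisation you carry out, with the maximum $\Delta(m)$ coming from Lemmas~\ref{lem4} and \ref{lem5} as in the proof of Proposition~\ref{prop6}. Your extra check that $\{1,\ldots,m-1\}^3$ is an admissible parameter set is fine but routine.
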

	
	Our next step is to build the tree associated with the pseudo-variety $\mathscr{L}_{2,m}$. To do this, we should characterise the possible children of each element in $\mathscr{L}_{2,m}$.
	
	Let $S$ be a numerical semigroup with $\mathrm{msg}(S)=\{n_1,\ldots,n_e\}$. If $s\in S$, then we denote by
	\[L_S(s)=\max\{a_1+\cdots+a_e \mid (a_1,\ldots,a_e)\in\mathbb{N}^e \mbox{ and } a_1n_1+\cdots+a_en_e=s\}.\]
	
	\begin{proposition}\label{prop16}
		Let $m\in \mathbb{N}\setminus\{0,1\}$, $S\in \mathscr{L}_{2,m}$, and $x\in\mathrm{msg}(S)\setminus\{m\}$. Then $S\setminus\{x\}\in\mathscr{L}_{2,m}$ if and only if $L_{S\setminus\{x\}}(x+m)\leq2$.
	\end{proposition}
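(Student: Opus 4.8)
The plan is to argue directly from the definition of second-level numerical semigroup, exploiting the fact that passing from $S$ to $S':=S\setminus\{x\}$ deletes a single point, so the only obstruction to $S'$ being second-level is that some value $x'+y'+z'-m$ with $x',y',z'\in S'\setminus\{0\}$ hits exactly the removed generator $x$. First I would record the elementary facts to be used throughout: since $x\neq m$, the set $S'$ is a numerical semigroup with $\mathrm{m}(S')=m$; since $m\in S$ we have $x+m\in S$ and $x+m\neq x$, hence $x+m\in S'$ and $L_{S'}(x+m)$ is well defined; and every minimal generator of $S'$ is positive, so any nonempty sum of minimal generators of $S'$ lies in $S'\setminus\{0\}$.

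For necessity, assume $S'\in\mathscr{L}_{2,m}$ and suppose, for contradiction, that $L_{S'}(x+m)\geq 3$. Then $x+m$ admits a factorization into minimal generators of $S'$ of length at least $3$; grouping its terms into three nonempty blocks yields $x+m=b_1+b_2+b_3$ with $b_1,b_2,b_3\in S'\setminus\{0\}$. Applying the defining property of $\mathscr{L}_{2,m}$ to $S'$ (which has multiplicity $m$) gives $b_1+b_2+b_3-m=x\in S'$, contradicting $x\notin S'$. Hence $L_{S'}(x+m)\leq 2$.

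For sufficiency, assume $L_{S'}(x+m)\leq 2$ and take an arbitrary $(x',y',z')\in(S'\setminus\{0\})^3$. Since $S'\setminus\{0\}\subseteq S\setminus\{0\}$ and $S\in\mathscr{L}_{2,m}$, we get $x'+y'+z'-m\in S$. It only remains to rule out $x'+y'+z'-m=x$; but that would mean $x'+y'+z'=x+m$, and concatenating factorizations of $x'$, $y'$, $z'$ into minimal generators of $S'$ (each of length at least $1$) would produce a factorization of $x+m$ of length at least $3$, i.e. $L_{S'}(x+m)\geq 3$, contrary to hypothesis. Therefore $x'+y'+z'-m\in S\setminus\{x\}=S'$, so $S'\in\mathscr{L}_{2,m}$.

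There is no genuinely difficult step; the only place requiring a little care is ensuring that the three summands produced really lie in $S'\setminus\{0\}$ — not $0$ and not the deleted generator $x$ — which is automatic in the necessity direction (nonempty blocks of minimal generators of $S'$ are nonzero elements of $S'$) and is built into the hypothesis in the sufficiency direction. One could alternatively derive the statement from Proposition~\ref{prop8} with $A=\{1,\ldots,m-1\}^3$ by translating the Apéry-set condition $w(i)+w(j)+w(k)\neq m+x$ into the condition on $L_{S'}(x+m)$, but the direct argument above is shorter and avoids bookkeeping with the Apéry set of $S'$.
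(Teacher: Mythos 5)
Your proof is correct and follows essentially the same route as the paper's: in both directions the key step is to translate between a length-$\geq 3$ factorization of $x+m$ in $S\setminus\{x\}$ and a decomposition $x+m=a+b+c$ with $a,b,c\in (S\setminus\{x\})\setminus\{0\}$, so that the pattern condition applied to $(a,b,c)$ forces $x$ into or out of $S\setminus\{x\}$. You merely spell out more explicitly the grouping/concatenation of minimal-generator factorizations that the paper leaves implicit.
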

	
	\begin{proof}	
		\textit{(Necessity.)} Let us suppose that $L_{S\setminus\{x\}}(x+m)\geq3$. Then there exists $(a,b,c)\in(S\setminus\{0,x\})^3$ such that $x+m=a+b+c$. Thus, $a+b+c-m=x\notin S\setminus\{x\}$ and, therefore, $S\setminus\{x\}\notin\mathscr{L}_{2,m}$.
		
		\textit{(Sufficiency.)} If $(a,b,c)\in(S\setminus\{0,x\})^3$, then $a+b+c-m\in S$, since $S\in \mathscr{L}_{2,m}$. Now, if $a+b+c-m=x$, then $L_{S\setminus\{x\}}(x+m)\geq3$. Therefore, $a+b+c-m\not= x$ and, consequently, $a+b+c-m\in S\setminus\{x\}$. Thus, we conclude that $S\setminus\{x\}\in \mathscr{L}_{2,m}$.
	\end{proof}
	
	By applying Theorem~\ref{thm7}, Propositions~\ref{prop14} and \ref{prop16}, and Lemma~\ref{lem10}, we can easily build the tree $\mathrm{G}(\mathscr{L}_{2,m})$.
	
	\begin{example}\label{exmp17}
		The first four levels of $\mathrm{G}(\mathscr{L}_{2,4})$ appear in the following figure.
		\begin{center}
			\begin{picture}(298,93)
				
				\put(196,84){$\langle 4,5,6,7 \rangle$}
				
				\put(173,73){\scriptsize 5} \put(218,70){\scriptsize 6} \put(257,73){\scriptsize 7}
				\put(160,66){\vector(3,1){42}} \put(216,66){\vector(0,1){14}} \put(274,66){\vector(-3,1){42}}
				\put(128,56){$\langle 4,6,7,9 \rangle$} \put(200,56){$\langle 4,5,7 \rangle$} \put(261,56){$\langle 4,5,6 \rangle$}
				
				\put(112,45){\scriptsize 6} \put(152,42){\scriptsize 7} \put(181,45){\scriptsize 9} \put(245,45){\scriptsize 7}
				\put(105,38){\vector(2,1){28}} \put(150,38){\vector(0,1){14}} \put(191,38){\vector(-2,1){28}} \put(255,38){\vector(-2,1){28}}
				\put(65,28){$\langle 4,7,9,10 \rangle$} \put(127,28){$\langle 4,6,9,11 \rangle$} \put(189,28){$\langle 4,6,7 \rangle$} \put(245,28){$\langle 4,5,11 \rangle$}
				
				\put(56,17){\scriptsize 7} \put(89,14){\scriptsize 9} \put(115,17){\scriptsize 10} \put(168,16){\scriptsize 9} \put(205,16){\scriptsize 11}
				\put(53,10){\vector(4,3){19}} \put(87,10){\vector(0,1){14}} \put(123,10){\vector(-4,3){19}} \put(172,10){\vector(-1,1){14}} \put(226,10){\vector(-4,1){56}}
				\put(7,0){$\langle 4,9,10,11 \rangle$} \put(63,0){$\langle 4,7,10,13 \rangle$} \put(119,0){$\langle 4,7,9 \rangle$}
				\put(165,0){$\langle 4,6,11,13 \rangle$} \put(221,0){$\langle 4,6,9 \rangle$}
			\end{picture}
		\end{center}
	\end{example}
	
	The Frobenius problem (see \cite{alfonsin}) consists in finding formulas that allow us to compute the Frobenius number and the genus of a numerical semigroup in terms of the minimal system of generators of such a numerical semigroup. This problem was solved in \cite{sylvester} for numerical semigroups with embedding dimension two. At present, the Frobenius problem is open for embedding dimension greater than or equal to $3$. However, if we know the Ap\'ery set $\mathrm{Ap}(S,x)$ for some $x\in S\setminus\{0\}$, then we have solved the Frobenius problem for $S$ because we have the following result from \cite{selmer}.
	
	\begin{lemma}\label{lem18}
		Let $S$ be a numerical semigroup and let $x\in S\setminus\{0\}$. Then
		\begin{enumerate}
			\item $\mathrm{F}(S)=(\max(\mathrm{Ap}(S,x)))-x$,
			\item $\mathrm{g}(S)=\frac{1}{x}(\sum_{w\in \mathrm{Ap}(S,x)} w)-\frac{x-1}{2}$.
		\end{enumerate}
	\end{lemma}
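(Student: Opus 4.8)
The plan is to exploit the elementary description of the Apéry set recalled just before Proposition~\ref{prop1}. Writing $\mathrm{Ap}(S,x)=\{w(0)=0,w(1),\ldots,w(x-1)\}$ with $w(j)$ the least element of $S$ congruent to $j$ modulo $x$, an integer $s$ lies in $S$ if and only if $s=w(s\bmod x)+tx$ for some $t\in\mathbb{N}$. Equivalently, for each residue class $j\in\{0,\ldots,x-1\}$ the elements of $S$ in that class form the arithmetic progression $w(j),w(j)+x,w(j)+2x,\ldots$, so the integers of $\mathbb{N}$ lying outside $S$ in class $j$ are exactly $j,j+x,\ldots,w(j)-x$. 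Both parts will be read off from this picture.

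For part~(1), I would set $w=\max(\mathrm{Ap}(S,x))$, say $w=w(j_0)$. First note that $w-x\equiv j_0\pmod x$ and $w-x<w(j_0)$, so $w-x\notin S$ and hence $\mathrm{F}(S)\geq w-x$. Conversely, for any integer $n>w-x$ write $n\equiv j\pmod x$; since $w(j)\leq w$ we get $n>w(j)-x$, and as $n-w(j)$ is a multiple of $x$ strictly greater than $-x$, it must be a nonnegative multiple of $x$, whence $n\in S$. Thus every integer exceeding $w-x$ belongs to $S$, so $\mathrm{F}(S)\leq w-x$, and equality follows.

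For part~(2), I would count $\mathbb{N}\setminus S$ class by class. By the description above, the number of non-elements in class $j$ is the number of integers of the form $j+tx$ with $0\leq t$ and $j+tx<w(j)$, namely $\frac{w(j)-j}{x}$ (which is $0$ when $j=0$, consistent with $w(0)=0$). Summing over $j$,
\[
\mathrm{g}(S)=\sum_{j=0}^{x-1}\frac{w(j)-j}{x}
=\frac{1}{x}\sum_{w\in\mathrm{Ap}(S,x)}w-\frac{1}{x}\cdot\frac{x(x-1)}{2}
=\frac{1}{x}\Big(\sum_{w\in\mathrm{Ap}(S,x)}w\Big)-\frac{x-1}{2}.
\]

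There is no genuine obstacle here, since this is essentially Selmer's classical computation; the only point requiring a little care is the bookkeeping in part~(2): verifying that the count of omitted numbers in each residue class is exactly $(w(j)-j)/x$ and that $\sum_{j=0}^{x-1}j=\frac{x(x-1)}{2}$, so that the correction term is $\frac{x-1}{2}$.
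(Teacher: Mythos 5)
Your proof is correct. Note that the paper does not actually prove this lemma; it simply cites it as a known result from Selmer (reference \cite{selmer}), so there is no ``paper's approach'' to compare against. Your argument is the standard one: partition $\mathbb{N}$ into residue classes modulo $x$, observe that the elements of $S$ in class $j$ are precisely $w(j), w(j)+x, w(j)+2x, \ldots$, and read off both the Frobenius number and the gap count from this picture. The bookkeeping in part~(2) is right: the gaps in class $j$ are $j, j+x, \ldots, w(j)-x$, of which there are $\frac{w(j)-j}{x}$, and summing $\frac{w(j)-j}{x}$ over $j$ gives the stated formula since $\sum_{j=0}^{x-1} j = \frac{x(x-1)}{2}$. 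One very minor point you glossed over in part~(1) but that causes no trouble: you conclude $n\in S$ for all integers $n>w-x$, which implicitly uses that such $n$ are nonnegative; this holds because $w(x-1)\geq x-1$ forces $\max(\mathrm{Ap}(S,x))\geq x-1$ when $x\geq 2$, and the case $x=1$ is trivial.
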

	
	The knowledge of $\mathrm{Ap}(S,x)=\{w(0),w(1),\ldots,w(x-1)\}$ also allows us to determine the membership of an integer to the numerical semigroup $S$. In fact, if $n\in\mathbb{N}$, then $n\in S$ if and only if $n\geq w(n\bmod x)$.
	
	Now our purpose is to show that, if $S\in \mathscr{L}_{2,m}$, then is rather easy to compute $\mathrm{Ap}(S,m)$. We need the following easy result.
	
	\begin{lemma}\label{lem19}
		Let $m\in\mathbb{N}\setminus\{0,1\}$, $S\in\mathscr{L}_{2,m}$, $\mathrm{msg}(S)=\{n_1\!=\!m,n_2,\ldots,n_e\}$. Then $\{0,n_2,\ldots,n_e\} \! \subseteq \! \mathrm{Ap}(S,m) \! \subseteq \! \{0,n_2,\ldots,n_e\} \cup \{n_i+n_j \mid (i,j)\in \{2,\ldots,e\}^2\}.$
	\end{lemma}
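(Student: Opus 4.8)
The plan is to establish the two inclusions separately.

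For the left-hand inclusion $\{0,n_2,\ldots,n_e\}\subseteq\mathrm{Ap}(S,m)$, I would not invoke the second-level hypothesis at all, since this part holds for every numerical semigroup of multiplicity $m$. Obviously $0=w(0)\in\mathrm{Ap}(S,m)$. Fix $i\in\{2,\ldots,e\}$. Because $n_i\in\mathrm{msg}(S)$ and $n_i>m=n_1$, we cannot have $n_i\in\langle m\rangle$, so $n_i\not\equiv 0\pmod m$. If $n_i\notin\mathrm{Ap}(S,m)$, then $s:=w(n_i\bmod m)$ is an element of $S$ strictly smaller than $n_i$ and congruent to $n_i$ modulo $m$ (hence positive, as $n_i\not\equiv 0$). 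Writing $s$ as a non-negative combination of $n_1,\ldots,n_e$, which cannot involve $n_i$ because $0<s<n_i$ and all generators are positive, and then adding the positive multiple of $m=n_1$ needed to recover $n_i=s+km$, exhibits $n_i$ as a combination of the remaining generators, contradicting minimality. Hence $n_i=w(n_i\bmod m)\in\mathrm{Ap}(S,m)$.

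For the right-hand inclusion, take $w\in\mathrm{Ap}(S,m)$ with $w\neq 0$ and write $w=a_1n_1+\cdots+a_en_e$ with $a_\ell\in\mathbb{N}$. Since $w-a_1m=a_2n_2+\cdots+a_en_e$ lies in $S$ and is congruent to $w$ modulo $m$, minimality of $w$ in its congruence class forces $a_1=0$; moreover $t:=a_2+\cdots+a_e\geq 1$ because $w\neq 0$. If $t\leq 2$ we are done, since then $w$ equals some $n_i$ or some $n_i+n_j$ with $i,j\in\{2,\ldots,e\}$. So suppose $t\geq 3$ and write $w=n_{i_1}+n_{i_2}+\cdots+n_{i_t}$ with each $i_\ell\in\{2,\ldots,e\}$. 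Put $x=n_{i_1}$, $y=n_{i_2}$, $z=n_{i_3}+\cdots+n_{i_t}$; all three belong to $S\setminus\{0\}$ (the last one because it is a sum of $t-2\geq 1$ positive generators), so the defining property of $\mathscr{L}_{2,m}$ gives $x+y+z-m=w-m\in S$. But $w-m\in S$ is congruent to $w$ modulo $m$, hence non-zero, and strictly smaller than $w$, contradicting $w\in\mathrm{Ap}(S,m)$. Therefore $t\leq 2$, which proves the inclusion.

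I expect the only delicate points to be the bookkeeping in the second inclusion: one must make sure the full multiple of $m$ is stripped off so that $w$ is expressed using only $n_2,\ldots,n_e$, and one must split the sum into three blocks in a way that always leaves the last block non-empty, so that the definition of second-level numerical semigroup genuinely applies. The minimality argument in the first inclusion and the contradiction with the Apéry condition are then routine.
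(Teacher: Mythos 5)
Your proof is correct. The paper does not actually supply a proof of Lemma~\ref{lem19}; it is presented as ``the following easy result,'' so there is no argument in the source to compare against. Your write-up fills the gap cleanly: the first inclusion correctly uses only minimality of the generating set (indeed it holds in any numerical semigroup with $\mathrm{m}(S)=m$, as you note), and for the second inclusion the two key observations---that $a_1=0$ by Ap\'ery-minimality, and that for $t\geq 3$ the split $x=n_{i_1}$, $y=n_{i_2}$, $z=n_{i_3}+\cdots+n_{i_t}$ keeps all three pieces in $S\setminus\{0\}$ so that the second-level property yields $w-m\in S$, contradicting $w\in\mathrm{Ap}(S,m)$---are exactly the right ones and correctly executed. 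The side remark that $w-m$ is ``non-zero'' is unnecessary for the contradiction (only $w-m\in S$, $w-m<w$, and $w-m\equiv w\pmod m$ are needed), but it is not wrong, since $w\neq 0$ in $\mathrm{Ap}(S,m)$ forces $w\not\equiv 0\pmod m$.
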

	
	As an immediate consequence of the above lemma we can formulate the following result.
	
	\begin{proposition}\label{prop20}
		Let $m\in\mathbb{N}\setminus\{0,1\}$, $S\in\mathscr{L}_{2,m}$, $\mathrm{msg}(S)=\{n_1\!=\!m,n_2,\ldots,n_e\}$. Then $\mathrm{Ap}(S,m)=\{w(0),w(1),\ldots,w(m-1)\}$ where $w(i)$ is the least element of $\{0,n_2,\ldots,n_e\} \cup \{n_i+n_j \mid (i,j)\in\{2,\ldots,e\}^2\}$ that is congruent to $i$ modulo $m$.
	\end{proposition}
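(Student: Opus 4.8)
The plan is to read Proposition~\ref{prop20} off from Lemma~\ref{lem19} with essentially no extra work. Set $T = \{0,n_2,\ldots,n_e\} \cup \{n_k+n_\ell \mid (k,\ell)\in\{2,\ldots,e\}^2\}$ (I rename the summation indices to $k,\ell$ to avoid a clash with the residue symbol in $w(i)$), fix $i\in\{0,1,\ldots,m-1\}$, and let $v(i)$ denote the least element of $T$ that is congruent to $i$ modulo $m$. The goal is to prove $w(i)=v(i)$. Note first that $v(i)$ is well defined: Lemma~\ref{lem19} gives $w(i)\in\mathrm{Ap}(S,m)\subseteq T$, and $w(i)\equiv i\pmod m$, so the set over which the minimum is taken is non-empty.

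Next I would check the inclusion $T\subseteq S$: the elements $0,n_2,\ldots,n_e$ lie in $S$ by definition of a system of generators, and every $n_k+n_\ell$ lies in $S$ because $S$ is closed under addition. Hence every element of $T$ that is congruent to $i$ modulo $m$ is an element of $S$ congruent to $i$ modulo $m$, and is therefore $\geq w(i)$ by the defining property of the Ap\'ery set; taking the minimum over such elements gives $v(i)\geq w(i)$.

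For the reverse inequality, $w(i)$ is itself one of the competitors in the minimum defining $v(i)$: it belongs to $T$ (again by Lemma~\ref{lem19}) and it is congruent to $i$ modulo $m$, whence $w(i)\geq v(i)$. Combining the two inequalities yields $w(i)=v(i)$, which is the assertion, since $i\in\{0,1,\ldots,m-1\}$ was arbitrary.

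I do not expect any genuine obstacle here: all the substance is already packaged into Lemma~\ref{lem19}, whose upper bound on $\mathrm{Ap}(S,m)$ is the place where the second-level hypothesis is actually used (via Proposition~\ref{prop12}). The only points requiring a moment's care are the double-inequality bookkeeping and the notational overloading of the letter $i$, both of which are routine.
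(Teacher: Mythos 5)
Your proposal is correct and is exactly the argument the paper intends when it declares the proposition ``an immediate consequence'' of Lemma~\ref{lem19}: the two inclusions of that lemma give $w(i)\in T$ (hence $w(i)\geq v(i)$) and $T\subseteq S$ together with the minimality of $w(i)$ in $S$ gives $v(i)\geq w(i)$. The paper supplies no written proof, and your double-inequality bookkeeping is the right way to fill that gap.
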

	
	\begin{corollary}
		Let $m\in\mathbb{N}\setminus\{0,1\}$ and $S\in\mathscr{L}_{2,m}$. Then $m = \mathrm{m}(S) \leq \frac{\mathrm{e}(S)\left(\mathrm{e}(S)+1\right)}{2}$.
	\end{corollary}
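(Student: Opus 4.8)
The plan is to use Lemma~\ref{lem19} (equivalently Proposition~\ref{prop20}) to bound the cardinality of $\mathrm{Ap}(S,m)$ and then recall that $\mathrm{Ap}(S,m)$ always has exactly $m$ elements. Write $\mathrm{msg}(S)=\{n_1=m,n_2,\ldots,n_e\}$ with $e=\mathrm{e}(S)$. By Lemma~\ref{lem19},
\[
\mathrm{Ap}(S,m)\subseteq \{0,n_2,\ldots,n_e\}\cup\{n_i+n_j \mid (i,j)\in\{2,\ldots,e\}^2\},
\]
so it suffices to count the elements of the set on the right-hand side.

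Next I would estimate the two pieces separately. The set $\{0,n_2,\ldots,n_e\}$ has at most $e$ elements. For the second piece, since $n_i+n_j=n_j+n_i$, the set $\{n_i+n_j \mid (i,j)\in\{2,\ldots,e\}^2\}$ is indexed by unordered pairs (with repetition allowed) from a set of $e-1$ indices, hence it has at most $\binom{e-1}{2}+(e-1)=\frac{(e-1)(e-2)}{2}+(e-1)=\frac{e(e-1)}{2}$ elements. Adding the two bounds gives
\[
\left| \{0,n_2,\ldots,n_e\}\cup\{n_i+n_j \mid (i,j)\in\{2,\ldots,e\}^2\} \right| \leq e+\frac{e(e-1)}{2}=\frac{e(e+1)}{2}.
\]

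Finally, since $\mathrm{Ap}(S,m)=\{w(0),w(1),\ldots,w(m-1)\}$ consists of $m$ pairwise distinct integers (one in each residue class modulo $m$), we conclude
\[
m=\left|\mathrm{Ap}(S,m)\right|\leq \frac{e(e+1)}{2}=\frac{\mathrm{e}(S)\left(\mathrm{e}(S)+1\right)}{2},
\]
and of course $m=\mathrm{m}(S)$ by hypothesis. This completes the argument. I do not expect any real obstacle here: the whole content is the inclusion supplied by Lemma~\ref{lem19}, and the only point requiring a little care is counting the sums $n_i+n_j$ as \emph{unordered} pairs with repetition (so that the count is $\binom{e}{2}=\frac{e(e-1)}{2}$ rather than $(e-1)^2$), which is exactly what makes the bound $\frac{e(e+1)}{2}$ come out.
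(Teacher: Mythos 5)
Your proof is correct and takes the natural route the paper intends: use Lemma~\ref{lem19} to trap $\mathrm{Ap}(S,m)$ inside $\{0,n_2,\ldots,n_e\}\cup\{n_i+n_j\mid(i,j)\in\{2,\ldots,e\}^2\}$, count the second piece as unordered pairs with repetition to get $\tfrac{e(e-1)}{2}$, and compare with $|\mathrm{Ap}(S,m)|=m$. The paper states this corollary without proof as an immediate consequence of the same lemma, so there is nothing to compare beyond noting your counting is exactly the missing step.
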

	
	Following the notation introduced in \cite{JPAA}, we say that $x\in \mathbb{Z}\setminus S$ is a \textit{pseudo-Frobenius number} of $S$ if $x+s\in S$ for all $s\in S\setminus\{0\}$. We denote by $\mathrm{PF}(S)$ the set of all the pseudo-Frobenius numbers of $S$. The cardinality of $\mathrm{PF}(S)$ is an important invariant of $S$ (see \cite{barucci}) that is the so-called
	\textit{type} of $S$ and it is denoted by $\mathrm{t}(S)$.
	
	Let $S$ be a numerical semigroup. Then we define the following binary relation over $\mathbb{Z}$: $a\leq_S b$ if $b-a\in S$. In \cite{springer} it is shown that $\leq_S$ is a partial order (that is, reflexive, transitive, and antisymmetric). Moreover, Proposition~2.20 of \cite{springer} is the next result.
	
	\begin{proposition}\label{prop21}
		Let $S$ be a numerical semigroup and $x\in S\setminus\{0\}$. Then
		\[\mathrm{PF}(S)=\{w-x \mid w\in \mathrm{Maximals}_{\leq_S} (\mathrm{Ap}(S,x))  \}\]
	\end{proposition}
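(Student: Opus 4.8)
The plan is to prove the set equality by establishing the finer equivalence: for every $z\in\mathbb{Z}$,
$z\in\mathrm{PF}(S)$ if and only if $z+x\in\mathrm{Maximals}_{\leq_S}(\mathrm{Ap}(S,x))$. Since the map $w\mapsto w-x$ is a bijection, this equivalence immediately yields the claimed formula. Throughout I would use the characterisation recalled just before the statement: an integer $n$ lies in $S$ exactly when $n = w(n\bmod x)+tx$ for some $t\in\mathbb{N}$; in particular, if $w\in\mathrm{Ap}(S,x)$, then $w-x\notin S$ (for $w\neq 0$ it is a smaller element of $S$'s residue class than the least one, and for $w=0$ it is negative).

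For the implication from right to left, I would take $w:=z+x\in\mathrm{Maximals}_{\leq_S}(\mathrm{Ap}(S,x))$. Then $z=w-x\notin S$ by the remark above, so $z\in\mathbb{Z}\setminus S$. To check $z\in\mathrm{PF}(S)$, fix $s\in S\setminus\{0\}$; I must show $z+s = (w+s)-x\in S$. Since $w+s\in S$, write $w+s = w(j)+tx$ with $j=(w+s)\bmod x$ and $t\in\mathbb{N}$. If $t\geq 1$, then $(w+s)-x = w(j)+(t-1)x\in S$ and we are done. If $t=0$, then $w+s = w(j)\in\mathrm{Ap}(S,x)$ and $w(j)-w = s\in S\setminus\{0\}$, so $w<_S w(j)$, contradicting the maximality of $w$ among the elements of $\mathrm{Ap}(S,x)$. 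Hence $z+s\in S$ for every $s\in S\setminus\{0\}$, i.e. $z\in\mathrm{PF}(S)$.

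For the converse, let $z\in\mathrm{PF}(S)$ and put $w:=z+x$. Because $z\notin S$ and $x\in S\setminus\{0\}$, the definition of $\mathrm{PF}(S)$ gives $w=z+x\in S$; and since $w-x=z\notin S$, in fact $w\in\mathrm{Ap}(S,x)$. It remains to see that $w$ is maximal: suppose $w'\in\mathrm{Ap}(S,x)$ satisfies $w\leq_S w'$, say $s:=w'-w\in S$, and assume $s\neq 0$. Then $s\in S\setminus\{0\}$ and, since $z\in\mathrm{PF}(S)$, we get $z+s\in S$; hence $w' = w+s = (z+s)+x$ with $z+s\in S$, so $w'-x = z+s\in S$, contradicting $w'\in\mathrm{Ap}(S,x)$. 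Therefore $s=0$, i.e. $w'=w$, which proves $w\in\mathrm{Maximals}_{\leq_S}(\mathrm{Ap}(S,x))$.

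There is no serious obstacle here; the only delicate point is keeping apart the statements ``$(w+s)-x\in S$'' and ``$w+s\in\mathrm{Ap}(S,x)$'', which is exactly the dichotomy $t\geq 1$ versus $t=0$ in the first implication and is precisely where the maximality hypothesis enters. One should also record the degenerate case $S=\mathbb{N}$, $x=1$, where $\mathrm{Ap}(S,1)=\{0\}$, $\mathrm{Maximals}_{\leq_S}(\mathrm{Ap}(S,1))=\{0\}$, and the formula correctly returns $\{-1\}=\mathrm{PF}(\mathbb{N})$.
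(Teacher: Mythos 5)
Your proof is correct; note that the paper itself offers no proof of this statement but simply quotes it as Proposition~2.20 of \cite{springer}, and your argument (establishing $z\in\mathrm{PF}(S)\Leftrightarrow z+x\in\mathrm{Maximals}_{\leq_S}(\mathrm{Ap}(S,x))$ via the translation $w\mapsto w-x$, invoking maximality exactly in the case $w+s\in\mathrm{Ap}(S,x)$) is precisely the standard proof given there.
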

	
	Let us observe that, if $w,w'\in\mathrm{Ap}(S,x)$, then $w'-w\in S$ if and only if $w'-w\in \mathrm{Ap}(S,x)$. Therefore, $\mathrm{Maximals}_{\leq_S} (\mathrm{Ap}(S,x))$ is the set
	\[\{w\in\mathrm{Ap}(S,x) \mid w'-w\notin\mathrm{Ap}(S,x)\setminus\{0\} \mbox{ for all } w'\in\mathrm{Ap}(S,x) \}.\]
	
	We finish this section with an example that illustrates the above results.
	
	\begin{example}\label{exmp22}
		Having in mind that $S=\langle 5,6,13 \rangle$ is a second-level numerical semigroup, it is easy to compute $\mathrm{Ap}(S,5)$. In fact, by applying Lemma~\ref{lem19}, we have that $\mathrm{Ap}(S,5) \subseteq \{0,6,13\} \cup \{12,19,26\}$ and, by Proposition~\ref{prop20}, we conclude that $\mathrm{Ap}(S,5) = \{0,6,12,13,19\}$. On the other hand, by Lemma~\ref{lem18}, we know that $\mathrm{F}(S)=19-5=14$ and $\mathrm{g}(S)=\frac{1}{5}(6+12+13+19)-\frac{5-1}{2}=8$. Finally, since $\mathrm{Maximals}_{\leq_S} (\mathrm{Ap}(S,5)) = \{12,19\}$, Proposition~\ref{prop21} allows us to claim that $\mathrm{PF}(S)=\{7,12\}$ and $\mathrm{t}(S)=2$.
	\end{example}
	
	\begin{remark}\label {rem-03}
		The definition of second-level numerical semigroup can be easily generalised to greater levels. Thus, we say that a numerical semigroup is of \textit{$n$th-level} if $x_1+\cdots+x_{n+1}-\mathrm{m}(S)\in S$ for all $(x_1,\ldots,x_{n+1})\in (S\setminus\{0\})^{n+1}$ and denote by $\mathscr{L}_{n,m}$ the set of all the $n$th-level numerical semigroups with multiplicity equal to $m$.
		
		It is clear that for $n$th-level we obtain similar results to those of second-level. In particular, $\mathscr{L}_{n,m}=\mathscr{C}(m,\{1,\ldots,m-1\}^{n+1})$ and Proposition~\ref{prop16} remains true taking $L_{S\setminus\{x\}}(x+m)\leq n$.
		
		On the other hand, having in mind that $\mathscr{L}_{1,m}$ is the family of numerical semigroups with maximal embedding dimension and that (by Remark~\ref{rem-01}) $\mathscr{L}_{m-1,m}=\mathscr{S}_m$, we can observe that
		\[ \mathscr{L}_{1,m} \subseteq \mathscr{L}_{2,m} \subseteq \cdots \subseteq \mathscr{L}_{m-1,m} = \mathscr{S}_m, \]
		where all the inclusions are strict, as we can deduce from the following example.
	\end{remark}
	
	\begin{example}\label{exmp-01}
		Let us set $m=5$ and
		\[ S_1=\langle 5,6 \rangle =\{0,5,6,10,11,12,15,16,17,18,20, \to\}. \]
		Then,
		\begin{itemize}
			\item $S_1\in\mathscr{L}_{4,5} \setminus \mathscr{L}_{3,5} = \mathscr{S}_5 \setminus \mathscr{L}_{3,5}$, because $4\times6-5\not\in S_1$ and $s_1+\cdots+s_5-5 \geq 5\times6-5\geq F(S_1)+1$;
			\item $S_2=S_1\cup\{19\}=\langle 5,6,19 \rangle\in\mathscr{L}_{3,5} \setminus \mathscr{L}_{2,5}$, because $3\times6-5\not\in S_2$ and $s_1+\cdots+s_4-5 \geq 4\times6-5\geq F(S_2)+1$;
			\item $S_3=S_2\cup\{13,14\}=\langle 5,6,13,14 \rangle\in\mathscr{L}_{2,5} \setminus \mathscr{L}_{1,5}$, because $2\times6-5\not\in S_2$ and $s_1+s_2+s_3-5 \geq 3\times6-5\geq F(S_3)+1$.
		\end{itemize}
		Generalising this example to other values of $m$ is trivial.
	\end{example}
	
	\begin{remark}
		Let us observe that the chain obtained in Remark~\ref{rem-03} is reminiscent, in some sense, of the chain associated with subtraction patterns (see \cite[Section 6]{bags}).
	\end{remark}

	\section{Thin numerical semigroups}\label{thin-ns}
	
	We say that a numerical semigroup $S$ is \textit{thin} if $2x-\mathrm{m}(S)\in S$ for all $x\in S\setminus\{0\}$. We denote by $\mathscr{T}_{m}$ the set of all the thin numerical semigroups with multiplicity equal to $m$.
	
	\begin{proposition}\label{prop23}
		Let $S$ be a numerical semigroup with minimal system of generators given by $\{m=n_1<n_2<\cdots<n_e\}$. Then the following two conditions are equivalents.
		\begin{enumerate}
			\item $S\in \mathscr{T}_{m}$.
			\item $2n_i-m\in S$ for all $i\in \{2,\ldots,e\}$.
		\end{enumerate}
	\end{proposition}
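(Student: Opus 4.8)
The plan is to mirror the proof of Proposition~\ref{prop12}, since the two statements are structurally identical: the triple $(x,y,z)$ is replaced by a single element $x$, and the pattern $x+y+z-m$ by $2x-m$. So I would prove the two implications separately, with the second one carrying essentially all the content.

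The implication $(1 \Rightarrow 2)$ is immediate from the definition of thin numerical semigroup: for each $i\in\{2,\ldots,e\}$ we have $n_i\in S\setminus\{0\}$, hence $2n_i-m\in S$.

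For $(2 \Rightarrow 1)$ I would take an arbitrary $x\in S\setminus\{0\}$ and distinguish two cases according to whether $m\mid x$. If $m\mid x$, write $x=km$ with $k\geq 1$; then $2x-m=(2k-1)m\in\langle m\rangle\subseteq S$, so there is nothing to check. If $m\nmid x$, then in any expression $x=a_1n_1+\cdots+a_en_e$ with $a_j\in\mathbb{N}$ we cannot have $a_2=\cdots=a_e=0$, for otherwise $x$ would be a multiple of $m=n_1$; hence $a_j\geq 1$ for some $j\in\{2,\ldots,e\}$, and therefore $x=n_j+s$ with $s:=x-n_j\in S$. Then
\[ 2x-m = 2n_j+2s-m = (2n_j-m)+2s, \]
where $2n_j-m\in S$ by hypothesis and $2s=s+s\in S$ because $S$ is closed under addition; consequently $2x-m\in S$. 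Since $x$ was arbitrary, $S\in\mathscr{T}_m$.

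I do not expect any real obstacle here; the only point that needs a little care is isolating the case $m\mid x$, in which the "extract a non-$m$ generator" argument does not apply and one instead uses directly that $m\in S$. Everything else is the same bookkeeping as in Proposition~\ref{prop12}.
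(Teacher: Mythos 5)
Your proof is correct and follows essentially the same route as the paper's: the same case split on whether $m\mid x$, and the same decomposition $x=n_j+s$ giving $2x-m=(2n_j-m)+2s\in S$. You simply spell out the details a little more explicitly.
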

	
	\begin{proof}
		\textit{(1. $\Rightarrow$ 2.)} It follows from the definition of thin numerical semigroup.
		
		\textit{(2. $\Rightarrow$ 1.)} Let $x \in S\setminus\{0\}$. If $x\equiv 0\pmod m$, then it is clear that $2x-m\in S$. Now, if $x\not\equiv 0\pmod m$, then we have that there exist $i\in \{2,\ldots,e\}$ and $s\in S$ such that $x=n_i+s$. Therefore, $2x-m=(2n_i-m)+2s\in S$ and, consequently, $S\in \mathscr{T}_{m}$.
	\end{proof}
	
	The above proposition allows us to easily decide whether a numerical semigroup is thin or not.
	
	\begin{example}\label{exmp24}
		Let us see that $S=\langle 4,6,7 \rangle \in \mathscr{T}_{4}$. In effect, it is clear that $\{2\cdot6-4,2\cdot7-4\} = \{8,10\} \subseteq S$. Therefore, by Proposition~\ref{prop23}, we have that $S\in \mathscr{T}_{m}$.
	\end{example}
	
	Now we want to show that $\mathscr{T}_{m}$ is a modular Frobenius pseudo-variety.
	
	\begin{proposition}\label{prop25}
		$\mathscr{T}_{m}=\mathscr{C}(m,\{(1,1),(2,2)\ldots,(m-1,m-1)\})$ for all $m\in \mathbb{N}\setminus\{0,1\}$.
	\end{proposition}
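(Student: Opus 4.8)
The plan is to prove the set equality $\mathscr{T}_m = \mathscr{C}(m,\{(1,1),(2,2),\ldots,(m-1,m-1)\})$ by a double inclusion, in complete analogy with the proof of Proposition~\ref{prop14}. Write $A = \{(1,1),(2,2),\ldots,(m-1,m-1)\}$ for brevity. Note that a pair $(s_1 \bmod m, s_2 \bmod m)$ lies in $A$ precisely when $s_1 \equiv s_2 \pmod m$ and neither is $\equiv 0$; for such pairs the defining condition of $\mathscr{C}(m,A)$ reads $s_1 + s_2 - m \in S$.

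For the inclusion $\mathscr{T}_m \subseteq \mathscr{C}(m,A)$, I would take $S \in \mathscr{T}_m$ and, using Proposition~\ref{prop1}, it suffices to check that $w(i) + w(i) - m \in S$ for every $i \in \{1,\ldots,m-1\}$, where $\mathrm{Ap}(S,m) = \{w(0),\ldots,w(m-1)\}$. But $w(i) \in S \setminus \{0\}$, so $2w(i) - m \in S$ directly from the definition of a thin numerical semigroup. For the reverse inclusion $\mathscr{C}(m,A) \subseteq \mathscr{T}_m$, take $S \in \mathscr{C}(m,A)$ and $x \in S \setminus \{0\}$; if $x \equiv 0 \pmod m$ then $2x - m \in S$ is immediate (it is a nonnegative multiple of $m$, hence in $S$, using $x \geq m$). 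If $x \not\equiv 0 \pmod m$, write $x = w(i) + tm$ with $i = x \bmod m \in \{1,\ldots,m-1\}$ and $t \in \mathbb{N}$; then $(i,i) \in A$, so $w(i) + w(i) - m \in S$, and therefore $2x - m = (2w(i) - m) + 2tm \in S$. This shows $S \in \mathscr{T}_m$.

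There is no real obstacle here: the argument is the one-variable, diagonal-set specialization of the general mechanism already established in Proposition~\ref{prop1} (which converts the condition on arbitrary elements of $S$ into a condition on the finitely many Apéry elements) together with the standard decomposition $x = w(x \bmod m) + tm$. The only point requiring a moment's care is handling the case $x \equiv 0 \pmod m$ separately in the second inclusion, since $0 \notin \{1,\ldots,m-1\}$ means such $x$ are not covered by any pair in $A$; but this case is trivial because every multiple of $m$ that is at least $m$ lies in $S$. Once this proposition is in place, Proposition~\ref{prop6} immediately yields that $\mathscr{T}_m$ is a modular Frobenius pseudo-variety with maximum $\Delta(m)$, paralleling Corollary~\ref{cor15}.
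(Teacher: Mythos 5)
Your proof is correct and follows essentially the same approach as the paper: the forward inclusion applies Proposition~\ref{prop1} to the diagonal Apéry conditions, and the reverse inclusion uses the decomposition $x = w(i) + tm$ together with the separate treatment of the case $x \equiv 0 \pmod m$. The extra remark at the end about the pseudo-variety consequence simply restates Corollary~\ref{cor26}.
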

	
	\begin{proof}
		If $S\in\mathscr{T}_{m}$ and $\mathrm{Ap}(S,m)=\{w(0),w(1),\ldots,w(m-1)\}$, then it is clear that $\{w(1)+w(1)-m,\ldots,w(m-1)+w(m-1)-m\}\subseteq S$. Therefore, by applying Proposition~\ref{prop1}, we have that $S\in\mathscr{C}(m,\{(1,1),(2,2)\ldots,(m-1,m-1)\})$.
		
		To see the other inclusion, let $S\in\mathscr{C}(m,\{(1,1),(2,2)\ldots, (m-1,m-1)\})$ and $x\in S\setminus\{0\}$. On the one hand, if $x\equiv 0 \pmod m$, then it is clear that $2x-m\in S$. On the other hand, if $x\not\equiv 0 \pmod m$, then there exist $i\in \{1,\ldots,m-1\}$ and $t\in \mathbb{N}$ such that $x=w(i)+tm$. Therefore, $2x-m=(w(i)+w(i)-m)+ 2tm \in S$ and in consequence $S\in\mathscr{T}_{m}$.
	\end{proof}
	
	From Propositions~\ref{prop6} and \ref{prop25}, we get the following result.
	
	\begin{corollary}\label{cor26}
		Let $m\in \mathbb{N}\setminus\{0,1\}$. Then $\mathscr{T}_{m}$ is a modular pseudo-Frobenius variety and, in addition, $\Delta(m)$ is the maximum of $S\in\mathscr{T}_{m}$.
	\end{corollary}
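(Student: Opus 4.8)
The plan is to obtain this corollary as a direct consequence of the two preceding propositions, so the argument is essentially a matter of quoting them in the right order. First I would invoke Proposition~\ref{prop25}, which identifies $\mathscr{T}_{m}$ with $\mathscr{C}(m,A)$ for the specific parameter $A=\{(1,1),(2,2),\ldots,(m-1,m-1)\}$. Before using this I would point out that $A$ is a finite subset of $\{1,\ldots,m-1\}^{2}\subseteq\bigcup_{k\in\mathbb{N}\setminus\{0,1\}}\{1,\ldots,m-1\}^{k}$, so it is an admissible parameter for the construction $\mathscr{C}(m,\cdot)$ and all the results of Section~\ref{C(m,A)-pseudo-variety} apply to it.

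Next I would apply Proposition~\ref{prop6}, which asserts that $\mathscr{C}(m,A)$ is a Frobenius pseudo-variety for every such $A$. Combining this with the equality $\mathscr{T}_{m}=\mathscr{C}(m,A)$ from Proposition~\ref{prop25} immediately yields that $\mathscr{T}_{m}$ is a Frobenius pseudo-variety; and since it is exhibited in the form $\mathscr{C}(m,A)$, it is by the definition adopted in Section~\ref{C(m,A)-pseudo-variety} a \emph{modular} Frobenius pseudo-variety, which is the first assertion of the corollary.

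For the claim about the maximum element, I would recall that the proof of Proposition~\ref{prop6} establishes, through Lemmas~\ref{lem4} and \ref{lem5}, that $\Delta(m)$ is the maximum of $\mathscr{C}(m,A)$; transporting this statement along the equality $\mathscr{T}_{m}=\mathscr{C}(m,A)$ gives $\Delta(m)=\max\mathscr{T}_{m}$. (If one prefers a self-contained check: $\Delta(m)\in\mathscr{T}_{m}$ because $x\in\Delta(m)\setminus\{0\}$ forces $2x-m\geq m$, hence $2x-m\in\Delta(m)$; and $\Delta(m)$ contains every numerical semigroup of multiplicity $m$, in particular every thin one.) There is no genuine obstacle in this proof; the only point demanding a moment's care is verifying that the set $A$ supplied by Proposition~\ref{prop25} satisfies the hypothesis under which Proposition~\ref{prop6} was proved, which it plainly does.
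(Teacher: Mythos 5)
Your argument is exactly the paper's: cite Proposition~\ref{prop25} to identify $\mathscr{T}_m$ with $\mathscr{C}(m,\{(1,1),\ldots,(m-1,m-1)\})$ and then apply Proposition~\ref{prop6} (together with the fact, established via Lemmas~\ref{lem4} and \ref{lem5} inside its proof, that $\Delta(m)$ is the maximum). Correct, and the same route as the paper; the extra self-contained check that $\Delta(m)$ is thin is fine but not needed.
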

	
	In order to build the tree associated with the pseudo-variety $S\in\mathscr{T}_{m}$, we are going to characterise the possible children of each $S\in\mathscr{T}_{m}$.
	
	\begin{proposition}\label{prop27}
		Let $m\in \mathbb{N}\setminus\{0,1\}$, $S\in\mathscr{T}_{m}$, and $x\in\mathrm{msg}(S)\setminus\{m\}$. Then $S\setminus\{x\}\in\mathscr{T}_{m}$ if and only if $\frac{x+m}{2}\notin S$.
	\end{proposition}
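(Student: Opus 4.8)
The plan is to argue directly from the definition of $\mathscr{T}_m$, closely following the pattern of the proof of Proposition~\ref{prop16}. First I would record the (routine) observation that, since $x \in \mathrm{msg}(S)\setminus\{m\}$ and $m = \mathrm{m}(S)$, we have $x > m$, the set $S\setminus\{x\}$ is a numerical semigroup, and its multiplicity is still $m$ (as $m \in S\setminus\{x\}$ and no positive element of $S$ is smaller than $m$); hence $S\setminus\{x\}\in\mathscr{T}_m$ is meaningful and, by definition, means $2a-m\in S\setminus\{x\}$ for every $a\in(S\setminus\{x\})\setminus\{0\}$.

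For necessity I would prove the contrapositive: assuming $\frac{x+m}{2}\in S$, set $a=\frac{x+m}{2}$. Since $x,m>0$ we have $a\neq 0$, and since $x\neq m$ we have $a\neq x$; thus $a\in(S\setminus\{x\})\setminus\{0\}$, while $2a-m=x\notin S\setminus\{x\}$, so $S\setminus\{x\}\notin\mathscr{T}_m$.

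For sufficiency, assume $\frac{x+m}{2}\notin S$ and pick an arbitrary $a\in(S\setminus\{x\})\setminus\{0\}$. Then $a\in S\setminus\{0\}$, so $2a-m\in S$ because $S\in\mathscr{T}_m$; and $2a-m\neq x$, since $2a-m=x$ would give $a=\frac{x+m}{2}\in S$, contradicting the hypothesis. Hence $2a-m\in S\setminus\{x\}$, and as $a$ was arbitrary, $S\setminus\{x\}\in\mathscr{T}_m$.

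I do not expect a genuine obstacle. The only points requiring a moment's care are: (i) the two degenerate equalities $\frac{x+m}{2}=0$ and $\frac{x+m}{2}=x$ must be excluded, which is precisely where $x,m>0$ and $x\neq m$ enter; and (ii) no separate treatment of the parity of $x+m$ is needed, because if $x+m$ is odd then $\frac{x+m}{2}\notin\mathbb{Z}$, so both $\frac{x+m}{2}\notin S$ and $2a-m\neq x$ hold automatically and the argument above goes through unchanged. (Should one prefer it, the same conclusion also follows by combining Propositions~\ref{prop8} and~\ref{prop25}, which turn the condition into $\frac{x+m}{2}\notin\mathrm{Ap}(S,m)\setminus\{0\}$, after which one checks that $\frac{x+m}{2}\in S$ forces $\frac{x+m}{2}\in\mathrm{Ap}(S,m)$ and $\frac{x+m}{2}\not\equiv 0\pmod m$; the direct argument is shorter.)
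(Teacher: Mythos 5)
Your proof is correct and is essentially the paper's own argument: necessity by exhibiting $a=\frac{x+m}{2}\in S\setminus\{0,x\}$ with $2a-m=x\notin S\setminus\{x\}$, and sufficiency by noting that $2a-m=x$ would force $\frac{x+m}{2}=a\in S$. Your extra remarks on the multiplicity of $S\setminus\{x\}$, the degenerate equalities, and the parity of $x+m$ are harmless refinements of the same argument.
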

	
	\begin{proof}	
		\textit{(Necessity.)} If $\frac{x+m}{2}\in S$, then $\frac{x+m}{2}\in S\setminus\{0,x\}$ and $2\frac{x+m}{2}-m=x\notin S \setminus \{x\}$. Therefore, $S\setminus\{x\}\notin\mathscr{T}_{m}$.
		
		\textit{(Sufficiency.)} If $a\in S\setminus\{0,x\}$, then $2a-m\in S$ because $S\in \mathscr{T}_{m}$. Now, if $2a-m=x$, then $\frac{x+m}{2}=a\in S$. Therefore, $2a-m\not= x$ and, consequently, $2a-m\in S\setminus\{x\}$. Thereby, $S\setminus\{x\}\in \mathscr{T}_{m}$.
	\end{proof}
	
	By applying Theorem~\ref{thm7}, Propositions~\ref{prop25} and \ref{prop27}, and Lemma~\ref{lem10}, we can build the tree $\mathrm{G}(\mathscr{T}_{m})$. Let us see an example.
	
	\begin{example}\label{exmp28}
		In the next figure we have the first four levels of $\mathrm{G}(\mathscr{T}_{4})$.
		\begin{center}
			\begin{picture}(303,93)
				
				\put(196,84){$\langle 4,5,6,7 \rangle$}
				
				\put(173,73){\scriptsize 5} \put(257,73){\scriptsize 7}
				\put(160,66){\vector(3,1){42}} \put(274,66){\vector(-3,1){42}}
				\put(133,56){$\langle 4,6,7,9 \rangle$} \put(261,56){$\langle 4,5,6 \rangle$}
				
				\put(102,45){\scriptsize 6} \put(157,42){\scriptsize 7} \put(196,45){\scriptsize 9}
				\put(85,38){\vector(4,1){56}} \put(155,38){\vector(0,1){14}} \put(221,38){\vector(-4,1){56}}
				\put(45,28){$\langle 4,7,9,10 \rangle$} \put(132,28){$\langle 4,6,9,11 \rangle$} \put(214,28){$\langle 4,6,7 \rangle$}
				
				\put(36,17){\scriptsize 7} \put(95,17){\scriptsize 9} \put(153,14){\scriptsize 9} \put(190,16){\scriptsize 11}
				\put(33,10){\vector(3,2){21}} \put(103,10){\vector(-3,2){21}} \put(159,10){\vector(0,1){14}} \put(201,10){\vector(-2,1){28}}
				\put(12,0){$\langle 4,9,10,11 \rangle$} \put(74,0){$\langle 4,7,10,13 \rangle$}
				\put(135,0){$\langle 4,6,11,13 \rangle$} \put(191,0){$\langle 4,6,9 \rangle$}
			\end{picture}
		\end{center}
	\end{example}
	
	We finish this section describing the Ap\'ery set for $S\in\mathscr{T}_{m}$.
	
	\begin{proposition}\label{prop29}
		Let $m\in\mathbb{N}\setminus\{0\}$, $S\in\mathscr{T}_{m}$, $\mathrm{msg}(S)=\{m=n_1,n_2,\ldots,n_e\}$. Then $\mathrm{Ap}(S,m)=\{w(0),w(1),\ldots,w(m-1)\}$, where $w(i)$ is the least element of the set $\{a_2n_2+\ldots+a_en_e\mid (a_2,\ldots,a_e)\in\{0,1\}^{e-1}\}$ that is congruent to $i$ modulo $m$.
	\end{proposition}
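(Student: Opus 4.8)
The plan is to prove the two inclusions between $w(i)$ and the least subset-sum in the prescribed residue class, exactly in the spirit of Lemma~\ref{lem19} and Proposition~\ref{prop20}, but using the thin property in place of the second-level property. Write $T=\{a_2n_2+\cdots+a_en_e\mid (a_2,\ldots,a_e)\in\{0,1\}^{e-1}\}$ and, for each $i\in\{0,\ldots,m-1\}$, let $T_i$ be the set of elements of $T$ congruent to $i$ modulo $m$. Every element of $T$ is a non-negative integer combination of $n_2,\ldots,n_e$, hence belongs to $S$; so as soon as $T_i\neq\emptyset$, its least element is an element of $S$ congruent to $i$ modulo $m$ and is therefore $\geq w(i)$. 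Thus the whole statement reduces to showing that $w(i)\in T_i$ for every $i$ (which in passing also yields $T_i\neq\emptyset$).

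To prove $w(i)\in T_i$, I would start from an arbitrary expression $w(i)=c_1m+c_2n_2+\cdots+c_en_e$ with $c_1,\ldots,c_e\in\mathbb{N}$ (it exists since $w(i)\in S$) and show it can be taken with $c_1=0$ and $c_j\in\{0,1\}$ for $j\geq 2$. The case $i=0$ is immediate, since $w(0)=0$ is the empty sum; so assume $i\neq 0$, in which case $w(i)>m$. If $c_1\geq 1$, then $w(i)-m=(c_1-1)m+c_2n_2+\cdots+c_en_e\in S$ is an element of $S$ congruent to $i$ modulo $m$ and strictly smaller than $w(i)$, contradicting minimality of $w(i)$ in $\mathrm{Ap}(S,m)$; hence $c_1=0$. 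If now some $c_j\geq 2$ with $j\geq 2$, write $w(i)=2n_j+t$ where $t=(c_j-2)n_j+\sum_{k\geq 2,\,k\neq j}c_kn_k\in S$; since $S\in\mathscr{T}_{m}$ we have $2n_j-m\in S$, and therefore $w(i)-m=(2n_j-m)+t\in S$ is again an element of $S$ congruent to $i$ modulo $m$ and strictly smaller than $w(i)$, a contradiction. Hence $w(i)=c_2n_2+\cdots+c_en_e$ with every $c_j\in\{0,1\}$, i.e.\ $w(i)\in T_i$.

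Combining the two bounds gives $w(i)=\min(T_i)$ for every $i$, which is exactly the claim. The only point requiring a little care is the reduction step for $c_j\geq 2$: the essential observation is that thinness is precisely the tool that lets one remove a repeated generator at the cost of subtracting $m$ while remaining inside $S$, and this is exactly what contradicts the defining minimality of an Apéry element. Everything else is bookkeeping, and I anticipate no genuine obstacle.
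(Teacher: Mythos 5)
Your proof is correct: the reduction of an arbitrary factorization of $w(i)$ to one with $c_1=0$ and $c_j\in\{0,1\}$ via $2n_j-m\in S$, combined with the trivial bound $\min(T_i)\geq w(i)$ because $T\subseteq S$, establishes exactly the stated characterisation. The paper states Proposition~\ref{prop29} without proof, and your argument is precisely the intended one, parallel to Lemma~\ref{lem19} and Proposition~\ref{prop20} for second-level semigroups but with thinness supplying the step that removes a repeated generator at the cost of one copy of $m$.
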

	
	\begin{corollary}
		Let $m\in\mathbb{N}\setminus\{0\}$ and $S\in\mathscr{T}_{m}$. Then $m = \mathrm{m}(S) \leq 2^{\mathrm{e}(S)-1}$.
	\end{corollary}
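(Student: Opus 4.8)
The plan is to obtain this bound by a direct cardinality count, feeding off the explicit description of the Apéry set provided by Proposition~\ref{prop29}. First I would recall that, by the very definition of $\mathrm{Ap}(S,m)$, the set $\mathrm{Ap}(S,m)=\{w(0),w(1),\ldots,w(m-1)\}$ consists of exactly $m$ elements, namely the least element of $S$ in each of the $m$ residue classes modulo $m$; in particular $w(0),\ldots,w(m-1)$ are pairwise distinct, since they lie in distinct residue classes.

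Next I would invoke Proposition~\ref{prop29}: writing $\mathrm{msg}(S)=\{m=n_1,n_2,\ldots,n_e\}$ with $e=\mathrm{e}(S)$, every $w(i)$ belongs to the set
\[ T=\{a_2n_2+\cdots+a_en_e \mid (a_2,\ldots,a_e)\in\{0,1\}^{e-1}\}. \]
Since $T$ is the image of the finite set $\{0,1\}^{e-1}$, which has $2^{e-1}$ elements, under the map $(a_2,\ldots,a_e)\mapsto a_2n_2+\cdots+a_en_e$, we get $|T|\le 2^{e-1}$.

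Combining the two observations: the $m$ pairwise distinct elements $w(0),\ldots,w(m-1)$ all lie in $T$, hence
\[ m=|\mathrm{Ap}(S,m)|\le |T|\le 2^{\mathrm{e}(S)-1}, \]
which is exactly the asserted inequality (and the edge case $m=1$, $S=\mathbb{N}$, $e=1$ is consistent with it). I do not expect any real obstacle: all the substance sits in Proposition~\ref{prop29}, whose proof already shows that the $\{0,1\}$-combinations of $n_2,\ldots,n_e$ suffice to realise every Apéry element, and the corollary is merely the counting consequence of that fact.
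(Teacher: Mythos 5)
Your proof is correct and is exactly the counting argument that the paper leaves implicit: Proposition~\ref{prop29} forces the $m$ pairwise distinct Ap\'ery elements into the set of $\{0,1\}$-combinations of $n_2,\ldots,n_e$, which has at most $2^{\mathrm{e}(S)-1}$ elements. This is the intended justification of the corollary.
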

	
	\begin{remark}
		Let us observe that we can generalise the concept of thin numerical semigroups in the following way: setting $n\in\mathbb{N}\setminus\{0,1\}$, we say that a numerical semigroup $S$ is $n$-thin if $nx-\mathrm{m}(S)\in S$ for all $x\in S\setminus\{0\}$, and denote by $\mathscr{T}_{n,m}$ the set of all $k$-thin numerical semigroups with multiplicity equal to $m$. It is clear that $\mathscr{T}_{m}=\mathscr{T}_{2,m}$.
		
		Again, $\mathscr{T}_{n,m}$ is a modular pseudo-variety, Proposition~\ref{prop27} is valid for the condition $\frac{x+m}{n}\not\in S$, and we can build the chain
		\[ \mathscr{T}_{2,m} \subsetneq \mathscr{T}_{3,m} \subsetneq \cdots \subsetneq \mathscr{T}_{m,m} = \mathscr{S}_m. \]
		Note that Example~\ref{exmp-01} also gives us the construction that ensures the strict inclusions in this case.
	\end{remark}

	\section{Strong numerical semigroups}\label{strong-ns}
	
	We say that a numerical semigroup $S$ is \textit{strong} if $x+y-\mathrm{m}(S)\in S$ for all $(x,y)\in (S\setminus\{0\})^2$ such that $x\not\equiv y \pmod {\mathrm{m}(S)}$. We denote by $\mathscr{R}_{m}$ the set of all the strong numerical semigroups with multiplicity equal to $m$.
	
	\begin{proposition}\label{prop30}
		Let $S$ be a numerical semigroup with minimal system of generators given by $\{m=n_1<n_2<\cdots<n_e\}$. Then the following two conditions are equivalents.
		\begin{enumerate}
			\item $S\in \mathscr{R}_{m}$.
			\item $\{n_i+n_j-m,3n_i-m\}\subseteq S$ for all $i\in \{2,\ldots,e\}$ and for all $(i,j)\in \{2,\ldots,e\}^2$ such that $i\not= j$.
		\end{enumerate}
	\end{proposition}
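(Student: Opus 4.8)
The plan is to prove the equivalence exactly along the lines of Propositions~\ref{prop12} and \ref{prop23}: the implication $(1 \Rightarrow 2)$ is obtained by specializing the defining condition of $\mathscr{R}_m$ to a few well-chosen pairs, while $(2 \Rightarrow 1)$ requires reducing an arbitrary evaluation of the expression $x+y-m$ to the finitely many ``generator'' evaluations listed in hypothesis~(2), by repeatedly peeling off minimal generators.

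For $(1 \Rightarrow 2)$ I would argue as follows. Fix $i \in \{2,\ldots,e\}$. Since $n_i$ is a minimal generator with $n_i > m$, it is not a multiple of $m$, so $2n_i \not\equiv n_i \pmod m$; applying the strong condition to the pair $(2n_i, n_i) \in (S\setminus\{0\})^2$ yields $3n_i - m \in S$. For a pair with $i \neq j$ in $\{2,\ldots,e\}$ I would first recall the standard fact (already used implicitly in the remark following Theorem~\ref{thm7}) that every minimal generator other than $m$ lies in $\mathrm{Ap}(S,m)$, so distinct minimal generators have pairwise distinct residues modulo $m$; hence $n_i \not\equiv n_j \pmod m$, and the strong condition applied to $(n_i, n_j)$ gives $n_i + n_j - m \in S$.

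For $(2 \Rightarrow 1)$, which is the substantive direction, take $(x,y) \in (S\setminus\{0\})^2$ with $x \not\equiv y \pmod m$ and show $x + y - m \in S$. If $x \equiv 0 \pmod m$ then $x \geq m$ and $x - m \in S$, so $x + y - m = (x-m) + y \in S$; the same works if $y \equiv 0 \pmod m$. Otherwise, since $x \not\equiv 0 \pmod m$ we may write $x = n_i + s$ with $i \in \{2,\ldots,e\}$ and $s \in S$ (some minimal generator distinct from $m$ must occur in an expression for $x$), and similarly $y = n_j + t$ with $j \in \{2,\ldots,e\}$ and $t \in S$. If $i \neq j$, hypothesis~(2) gives $n_i + n_j - m \in S$, hence $x + y - m = (n_i + n_j - m) + s + t \in S$. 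If $i = j$, then $x \not\equiv y \pmod m$ forces $s \not\equiv t \pmod m$, so (exchanging $x$ and $y$ if necessary, which is harmless since the goal is symmetric) we may assume $s \not\equiv 0 \pmod m$ and peel again: $s = n_k + s'$ with $k \in \{2,\ldots,e\}$ and $s' \in S$. When $k \neq i$, hypothesis~(2) gives $n_i + n_k - m \in S$ and $x + y - m = (n_i + n_k - m) + s' + y \in S$; when $k = i$, we have $x = 2n_i + s'$ and hypothesis~(2) gives $3n_i - m \in S$, so $x + y - m = (3n_i - m) + s' + t \in S$. In every case $x + y - m \in S$, so $S \in \mathscr{R}_m$.

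The one genuinely delicate point is the case $i = j$ in $(2 \Rightarrow 1)$: a single peeling is not enough because it may reproduce the same generator twice, and the second peeling is exactly what forces a term of the form $3n_i - m$ to appear, which is the structural reason that family of expressions must be included in condition~(2) alongside the $n_i + n_j - m$. Everything else I expect to be routine bookkeeping; the only subtleties to keep in mind are that peeling a generator $\neq m$ off an element of $S$ is legitimate precisely when that element is $\not\equiv 0 \pmod m$, and that in the case $i=j$ at least one of $s,t$ is $\not\equiv 0 \pmod m$ since $s \not\equiv t \pmod m$.
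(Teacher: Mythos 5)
Your proposal is correct, and its first direction is the same as the paper's, but your argument for $(2 \Rightarrow 1)$ takes a somewhat different route. The paper writes $x=w(i)+pm$ and $y=w(j)+qm$ with $w(i),w(j)\in\mathrm{Ap}(S,m)$ and then analyzes factorizations of these Ap\'ery elements: either one can subtract two \emph{distinct} non-$m$ generators, $n_a$ from $w(i)$ and $n_b$ from $w(j)$, in which case $n_a+n_b-m$ does the job, or else $w(i)=r\,n_a$ and $w(j)=t\,n_a$ are multiples of one and the same generator, and then $i\neq j$ forces $r+t\geq 3$, so $3n_a-m$ is what is needed. You instead peel minimal generators directly off $x$ and $y$ (in the style of Propositions~\ref{prop12} and \ref{prop23}), and when the same generator $n_i$ heads both decompositions you peel a second time from whichever residual summand is $\not\equiv 0 \pmod m$ (one must be, since $x\not\equiv y \pmod m$), landing either on $n_i+n_k-m$ or on $3n_i-m$. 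Both arguments isolate the same structural point, namely that the conditions $3n_i-m\in S$ are exactly what is needed when no pair of distinct generators can be split off; yours avoids the Ap\'ery set entirely and is somewhat more self-contained and explicit about that case, while the paper's phrasing through $\mathrm{Ap}(S,m)$ ties in more directly with Proposition~\ref{prop1} and with the later description of the Ap\'ery set in Proposition~\ref{prop36}. Your justification in $(1 \Rightarrow 2)$ that distinct non-$m$ minimal generators have distinct nonzero residues modulo $m$ is exactly the observation the paper makes.
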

	
	\begin{proof}
		\textit{(1. $\Rightarrow$ 2.)} It is enough to observe that $n_i\not\equiv n_j \pmod m$ and that $2n_i\not\equiv n_i \pmod m$.
		
		\textit{(2. $\Rightarrow$ 1.)} Let $x,y \in S\setminus\{0\}$ such that $x\not\equiv y \pmod {\mathrm{m}}$. If $x\equiv 0\pmod m$ or $y\equiv 0\pmod m$, then it is clear that $x+y-m\in S$. Now, if $x\not\equiv 0\pmod m$ and $y\not\equiv 0\pmod m$, then there exists $(i,j)\in \{1,\ldots,m-1\}^2$, with $i\not=j$, and there exists $(p,q)\in\mathbb{N}^2$ such that $x=w(i)+pm$ and $y=w(j)+qm$. Moreover, if there exists $(a,b)\in\{2,\ldots,e\}^2$ such that $a\not=b$ and $w(i)-n_a,w(j)-n_b\in S$, then it is easy to see that $x+y-m\in S$. In other case, there exists $(r,t)\in (\mathbb{N}\setminus \{0\})^2$ such that $w(i)=r\cdot n_a$ and $w(j)=t\cdot n_a$ for some $a\in\{2,\ldots,e\}$. Then, since $i\not=j$, we deduce that $r+t\geq3$ and, consequently, $x+y-m\in S$. In conclusion, $S\in \mathscr{R}_{m}$.
	\end{proof}
	
	The above proposition allows us to easily decide whether a numerical semigroup is strong or not. 
	
	\begin{example}\label{exmp31}
		If $S=\langle 4,5,7 \rangle$, then $\{5+7-4,3\cdot5-4,3\cdot7-4\} = \{8,11,17\} \subseteq S$. Therefore, by Proposition~\ref{prop30}, we have that $S\in \mathscr{R}_{m}$.
	\end{example}
	
	Now we want to show that $\mathscr{R}_{m}$ is a modular Frobenius pseudo-variety. Let us denote by $A=\{1,\ldots,m-1\}^2\setminus\{(1,1),(2,2),\ldots,(m-1,m-1)\}$.
	
	\begin{proposition}\label{prop32}
		$\mathscr{R}_{m}=\mathscr{C}(m,A)$ for all $m\in \mathbb{N}\setminus\{0,1\}$.
	\end{proposition}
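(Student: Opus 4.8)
The plan is to imitate the proofs of Propositions~\ref{prop14} and \ref{prop25} almost verbatim, the only new ingredient being the observation that the index set $A=\{1,\ldots,m-1\}^2\setminus\{(1,1),\ldots,(m-1,m-1)\}$ is engineered precisely so that a pair $(i,j)\in\{1,\ldots,m-1\}^2$ lies in $A$ if and only if $i\not=j$, which for Ap\'ery elements is equivalent to $w(i)\not\equiv w(j)\pmod m$. With this dictionary in hand, Proposition~\ref{prop1} does all the translation between the membership condition for $\mathscr{C}(m,A)$ and the defining condition of $\mathscr{R}_m$.

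For the inclusion $\mathscr{R}_m\subseteq\mathscr{C}(m,A)$, I would take $S\in\mathscr{R}_m$ with $\mathrm{Ap}(S,m)=\{w(0),w(1),\ldots,w(m-1)\}$, pick $(i,j)\in A$ (so $i\not=j$), and note that $w(i),w(j)\in S\setminus\{0\}$ with $w(i)\not\equiv w(j)\pmod m$; the definition of strong then gives $w(i)+w(j)-m\in S$, and Proposition~\ref{prop1} yields $S\in\mathscr{C}(m,A)$. For the reverse inclusion I would take $S\in\mathscr{C}(m,A)$ and $(x,y)\in(S\setminus\{0\})^2$ with $x\not\equiv y\pmod m$, and split into cases: if $x\equiv 0\pmod m$ (or symmetrically $y\equiv 0\pmod m$) then $x$ is a positive multiple of $m$, so $x-m\in S$ and $x+y-m=(x-m)+y\in S$; otherwise set $i=x\bmod m$ and $j=y\bmod m$, which lie in $\{1,\ldots,m-1\}$ and satisfy $i\not=j$, hence $(i,j)\in A$, and writing $x=w(i)+pm$, $y=w(j)+qm$ with $p,q\in\mathbb{N}$ we get $x+y-m=(w(i)+w(j)-m)+(p+q)m\in S$ by Proposition~\ref{prop1}. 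This shows $S\in\mathscr{R}_m$ and closes the proof.

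The argument is routine, so there is no real obstacle; the only point requiring a little attention is the bookkeeping in the degenerate case where one of $x,y$ is a multiple of $m$ (a case not recorded in $A$, since $A$ contains only off-diagonal pairs of residues in $\{1,\ldots,m-1\}$), and making explicit the equivalence "$(i,j)\in A \iff i\not=j$" that lets the characterisation of $\mathscr{R}_m$ in Proposition~\ref{prop30} and membership in $\mathscr{C}(m,A)$ line up. As with the earlier sections, once this identification is made, Proposition~\ref{prop6} immediately upgrades the equality to the statement that $\mathscr{R}_m$ is a modular Frobenius pseudo-variety with maximum $\Delta(m)$.
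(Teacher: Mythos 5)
Your proof is correct and follows essentially the same route as the paper's: reducing both inclusions to Proposition~\ref{prop1}, using the dictionary $(i,j)\in A \iff i\neq j \iff w(i)\not\equiv w(j)\pmod m$, and splitting the reverse direction into the case where one of $x\bmod m, y\bmod m$ vanishes versus the generic case handled by the Ap\'ery set. In fact you are slightly more careful than the printed proof, which omits the standing hypothesis $x\not\equiv y\pmod m$ in the reverse inclusion and contains a couple of typographical slips ($w(i)-w(j)-m$ and $w(i)+w(i)-m$ where $w(i)+w(j)-m$ is clearly intended).
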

	
	\begin{proof}
		Let $S\in\mathscr{R}_{m}$, $\mathrm{Ap}(S,m)=\{w(0),w(1),\ldots,w(m-1)\}$, and $(i,j)\in A$. Then $\{(w(i),w(j)\}\in (S\setminus\{0\})^2$ and $w(i)\not\equiv w(j) \pmod m$. Therefore, $w(i)-w(j)-m\in S$ and, by applying Proposition~\ref{prop1}, we have that $S\in\mathscr{C}(m,A)$.
		
		To see the other inclusion, let $S\in\mathscr{C}(m,A)$ and $(x,y)\in(S\setminus\{0\})^2$. On the one hand, if $0 \in\{x \bmod m, y\bmod m\}$, then it is clear that $x+y-m\in S$. On the other hand, if $0 \notin\{x \bmod m, y\bmod m\}$, then there exist $(i,j)\in A$ and $(p,q)\in \mathbb{N}^2$ such that $x=w(i)+pm$ and $y=w(j)+qm$. Therefore, $x+y-m=(w(i)+w(i)-m)+ (p+q)m \in S$ and, consequently, $S\in\mathscr{R}_{m}$.
	\end{proof}
	
	From Propositions~\ref{prop6} and \ref{prop32}, we get the following result.
	
	\begin{corollary}\label{cor33}
		Let $m\in \mathbb{N}\setminus\{0,1\}$. Then $\mathscr{R}_{m}$ is a modular pseudo-Frobenius variety and, in addition, $\Delta(m)$ is the maximum of $S\in\mathscr{R}_{m}$.
	\end{corollary}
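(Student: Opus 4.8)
The plan is to read this off directly from the two quoted results, with one small verification. First I would observe that the set $A=\{1,\ldots,m-1\}^2\setminus\{(1,1),\ldots,(m-1,m-1)\}$ appearing in Proposition~\ref{prop32} is a finite subset of $\bigcup_{k\in\mathbb{N}\setminus\{0,1\}}\{1,\ldots,m-1\}^k$ (indeed it is contained in $\{1,\ldots,m-1\}^2$), so it is an admissible parameter for the construction $\mathscr{C}(m,\cdot)$. Hence Proposition~\ref{prop32} exhibits $\mathscr{R}_{m}$ as a family of the form $\mathscr{C}(m,A)$, that is, as a modular Frobenius pseudo-variety by the very definition of that notion.

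Next I would invoke Proposition~\ref{prop6}, which asserts that $\mathscr{C}(m,A)$ is a Frobenius pseudo-variety; combined with the previous paragraph, this already yields the first claim of the corollary. For the statement about the maximum, I would recall that the proof of Proposition~\ref{prop6} identifies $\max\big(\mathscr{C}(m,A)\big)=\Delta(m)$: by Lemma~\ref{lem4} we have $\Delta(m)\in\mathscr{C}(m,A)$, and by Lemma~\ref{lem5} (together with the inclusion $\mathscr{C}(m,A)\subseteq\mathscr{S}_m$) the semigroup $\Delta(m)$ is the largest element of $\mathscr{S}_m$, hence \emph{a fortiori} of $\mathscr{C}(m,A)$. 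Transporting this along the equality of Proposition~\ref{prop32} gives that $\Delta(m)$ is the maximum of $\mathscr{R}_{m}$.

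I do not expect any obstacle here: all of the real work has been carried out in Proposition~\ref{prop32} (the translation, via the Ap\'ery set, of the defining strongness condition into a membership condition for $\mathscr{C}(m,A)$) and in Proposition~\ref{prop6}. The argument is word for word the same as the ones proving Corollaries~\ref{cor15} and~\ref{cor26}, and the only genuinely new point—that the prescribed $A$ lies within the allowed range of parameters—is immediate.
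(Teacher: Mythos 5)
Your argument is correct and coincides with the paper's, which likewise deduces the corollary immediately from Propositions~\ref{prop6} and~\ref{prop32} (with the identification of $\Delta(m)$ as the maximum coming from Lemmas~\ref{lem4} and~\ref{lem5} inside the proof of Proposition~\ref{prop6}). Your extra check that $A$ is an admissible parameter set is harmless but not needed beyond what the paper already assumes.
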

	
	We are now interested in the description of the tree associated with the pseudo-variety $S\in\mathscr{R}_{m}$. In order to do that, we are going to characterise the children of an arbitrary $S\in\mathscr{R}_{m}$.
	
	\begin{proposition}\label{prop34}
		Let $m\in \mathbb{N}\setminus\{0,1\}$, $S\in\mathscr{R}_{m}$, and $x\in\mathrm{msg}(S)\setminus\{m\}$. Then $S\setminus\{x\}\in\mathscr{R}_{m}$ if and only if $x+m\notin \{a+b \mid a,b\in \mathrm{msg}(S)\setminus\{m,x\}, \; a\not= b\} \cup \{3a \mid a \in \mathrm{msg}(S)\setminus\{m,x\}\}$.
	\end{proposition}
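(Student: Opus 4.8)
The plan is to argue directly from the definition of strong numerical semigroup, in the spirit of the proofs of Propositions~\ref{prop16} and \ref{prop27} (one could instead route through Propositions~\ref{prop8} and \ref{prop32}, but the direct argument is more transparent). I only need three elementary facts about a numerical semigroup $S$: two distinct minimal generators are incongruent modulo $m$; no minimal generator other than $m$ is a multiple of $m$; and a minimal generator of $S$ is never a sum of two nonzero elements of $S$. Put $G^{*}=\mathrm{msg}(S)\setminus\{m,x\}$ and $\mathcal{F}=\{a+b\mid a,b\in G^{*},\ a\neq b\}\cup\{3a\mid a\in G^{*}\}$, so the assertion is $S\setminus\{x\}\in\mathscr{R}_{m}\iff x+m\notin\mathcal{F}$.

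For necessity, suppose $S\setminus\{x\}\in\mathscr{R}_{m}$ but $x+m\in\mathcal{F}$. If $x+m=a+b$ with $a\neq b$ in $G^{*}$, then $a,b$ are nonzero elements of $S\setminus\{x\}$, they are incongruent modulo $m$, and $a+b-m=x\notin S\setminus\{x\}$, so $S\setminus\{x\}$ is not strong --- a contradiction. If $x+m=3a$ with $a\in G^{*}$, the pair $a,2a$ serves the same purpose: both lie in $(S\setminus\{x\})\setminus\{0\}$ (note $2a\neq x$, else $a=m$), they are incongruent modulo $m$ since $m\nmid a$, and $a+2a-m=x$.

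For sufficiency, suppose $x+m\notin\mathcal{F}$ and take $u,v\in(S\setminus\{x\})\setminus\{0\}$ with $u\not\equiv v\pmod m$. As $S$ is strong, $u+v-m\in S$, and I must exclude $u+v-m=x$; so assume $u+v=x+m$. Since $u,v\geq m$ we get $u,v\leq x$, and as $u,v\neq x$ in fact $u,v<x$, so no minimal-generator expansion of $u$ or of $v$ uses $x$. Summing such expansions writes $x+m$ as a nonnegative combination of $\{m\}\cup G^{*}$, and since $x$ is a minimal generator the generator $m$ cannot occur (otherwise $x$ would be a combination of the remaining generators); hence $u,v\in\langle G^{*}\rangle$. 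The crucial step is then a pair of reductions. If some expansion of $u$ involves two distinct generators $a_{1}\neq a_{2}$, then $a_{1}+a_{2}-m\in S\setminus\{0\}$ by strongness, and $x=(a_{1}+a_{2}-m)+\big((u-a_{1}-a_{2})+v\big)$ exhibits $x$ as a sum of two nonzero elements of $S$, impossible. If some generator $g$ occurs at least three times in an expansion of $u$, then $3g-m\in S\setminus\{0\}$ and $x=(3g-m)+\big((u-3g)+v\big)$ is again impossible. The same applies to $v$, so $u=\alpha g$ and $v=\beta h$ with $g,h\in G^{*}$ and $\alpha,\beta\in\{1,2\}$. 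A short case check finishes: if $g=h$ then $\alpha\neq\beta$ (as $u\not\equiv v\pmod m$), so $x+m=3g\in\mathcal{F}$; if $g\neq h$ and $\alpha=\beta=1$, then $x+m=g+h\in\mathcal{F}$; both contradict the hypothesis. If $g\neq h$ and, after possibly swapping $u$ and $v$, $\alpha=2$, then $g+\beta h-m=(g+h-m)+(\beta-1)h\in S\setminus\{0\}$ and $x=g+(g+\beta h-m)$ is again a forbidden decomposition. Thus $u+v=x+m$ cannot occur, so $u+v-m\in S\setminus\{x\}$ and $S\setminus\{x\}\in\mathscr{R}_{m}$.

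The step I expect to need the most care is the reduction in the sufficiency argument: one has to see that any expression of $x+m$ as a sum of generators from $G^{*}$ which is not of one of the two shapes defining $\mathcal{F}$ can be collapsed --- via the strong-semigroup relations $a_{1}+a_{2}-m\in S$ and $3g-m\in S$ --- into a decomposition $x=p+q$ with $p,q\in S\setminus\{0\}$, which is impossible because $x$ is a minimal generator. This is exactly where the hypothesis $S\in\mathscr{R}_{m}$ is used, and it pins $\mathcal{F}$ down to precisely the sums of two distinct generators and the triples of one generator. The remaining bookkeeping, notably using $u\not\equiv v\pmod m$ to rule out $u=v=g$, is routine.
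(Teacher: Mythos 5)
Your proof is correct. The necessity half is essentially the paper's argument: you exhibit the pairs $(a,b)$ and $(a,2a)$ in $\bigl((S\setminus\{x\})\setminus\{0\}\bigr)^2$ with incongruent residues and note that strongness of $S\setminus\{x\}$ forbids $a+b-m=x$ and $3a-m=x$ (your remark that $2a=x$ would force $a=m$ is a fine substitute for invoking minimality of $x$). The sufficiency half is where you genuinely diverge. The paper controls $\mathrm{msg}(S\setminus\{x\})$ via Lemma~\ref{lem10} and then applies the generator criterion of Proposition~\ref{prop30} to $S\setminus\{x\}$, so it only has to exclude the equalities $a+b-m=x$ and $3a-m=x$ for minimal generators, which is exactly what the hypothesis provides. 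You instead verify the definition directly: assuming $u+v=x+m$ with $u,v\in(S\setminus\{x\})\setminus\{0\}$ incongruent modulo $m$, you expand $u$ and $v$ in minimal generators, use minimality of $x$ to rule out occurrences of $x$ and of $m$ in the expansion, and then collapse, via the strongness relations $a_1+a_2-m\in S$ and $3g-m\in S$, every factorization that is not of the shape $g+h$ (with $g\neq h$) or $3g$ into a decomposition of $x$ as a sum of two nonzero elements of $S$, which is impossible; the surviving shapes are precisely those excluded by the hypothesis. In effect you re-prove, inside the proof, the reduction-to-generators that the paper delegates to Proposition~\ref{prop30}, at the cost of a longer case analysis; what you gain is a self-contained argument that makes explicit why only the two shapes $a+b$ and $3a$ can ever yield $x+m$ (your case $u=2g$, $v=\beta h$ is killed by minimality of $x$, not by the hypothesis) and that never needs to handle the possible new generator $x+m$ of $S\setminus\{x\}$, a point the paper's route leaves to the reader when it invokes Lemma~\ref{lem10}.
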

	
	\begin{proof}	
		\textit{(Necessity.)} If $a,b\in \mathrm{msg}(S)\setminus\{m,x\}$ and $a\not= b$, then $(a,b) \in (S\setminus\{0,x\})^2$ and $a\not\equiv b \pmod m$. Since $S\setminus\{x\}\in\mathscr{R}_{m}$, we have that $a+b-m\in S\setminus\{x\}$ and, therefore, $a+b-x\not=x$. Thus, $x+m\notin \{a+b \mid a,b\in \mathrm{msg}(S)\setminus\{m,x\}, \; a\not= b\}$.
		
		On the other hand, if $a\in \mathrm{msg}(S)\setminus\{m,x\}$, then $(a,2a)\in (S\setminus\{0,x\})^2$ and $a\not\equiv 2a \pmod m$. Once again, since $S\setminus\{x\}\in\mathscr{R}_{m}$, we have that $3a-m\in S\setminus\{x\}$ and, therefore, $a+b-x\not=x$. Thus, $x+m\notin \{3a \mid a\in \mathrm{msg}(S)\setminus\{m,x\}\}$.
		
		\textit{(Sufficiency.)} Let $a,b\in S\setminus\{0,x\}$ such that $a\not=b$. Since $S\in \mathscr{R}_{m}$, we have that $a+b-m\in S$ and $3a-m\in S$. Now, by Lemma~\ref{lem10}, we know that $\mathrm{msg}(S)\setminus\{x\} \subseteq \mathrm{msg}(S\setminus\{x\}) \subseteq (\mathrm{msg}(S)\setminus\{x\}) \cup \{x+m\}$. Thus, from this fact and the hypothesis, it easily follows that $a+b-m\not=x$ and $3a-m\not=x$, that is, $a+b-m,3a-m\in S\setminus\{x\}$. By applying Proposition~\ref{prop30}, we conclude that $S\setminus\{x\}\in \mathscr{R}_{m}$.
	\end{proof}
	
	From Theorem~\ref{thm7}, Propositions~\ref{prop32} and \ref{prop34}, and Lemma~\ref{lem10}, we can build the tree $\mathrm{G}(\mathscr{R}_{m})$. Let us see an example.
	
	\begin{example}\label{exmp35}
		In the next figure we have the first four levels of $\mathrm{G}(\mathscr{R}_{4})$.
		\begin{center}
			\begin{picture}(320,93)
				
				\put(191,84){$\langle 4,5,6,7 \rangle$}
				
				\put(178,73){\scriptsize 5} \put(242,73){\scriptsize 6}
				\put(170,66){\vector(2,1){28}} \put(254,66){\vector(-2,1){28}}
				\put(133,56){$\langle 4,6,7,9 \rangle$} \put(251,56){$\langle 4,5,7 \rangle$}
				
				\put(114,45){\scriptsize 6} \put(188,45){\scriptsize 7} \put(282,45){\scriptsize 7}
				\put(100,38){\vector(3,1){42}} \put(206,38){\vector(-3,1){42}} \put(287,38){\vector(-1,1){14}}
				\put(70,28){$\langle 4,7,9,10 \rangle$} \put(194,28){$\langle 4,6,9,11 \rangle$} \put(279,28){$\langle 4,5,11 \rangle$}
				
				\put(58,17){\scriptsize 7} \put(98,14){\scriptsize 9} \put(125,16){\scriptsize 10} \put(227,16){\scriptsize 9}
				\put(50,10){\vector(2,1){28}} \put(96,10){\vector(0,1){14}} \put(136,10){\vector(-2,1){28}} \put(231,10){\vector(-1,1){14}}
				\put(7,0){$\langle 4,9,10,11 \rangle$} \put(67,0){$\langle 4,7,10,13 \rangle$}
				\put(128,0){$\langle 4,7,9 \rangle$} \put(206,0){$\langle 4,6,11,13 \rangle$}
			\end{picture}		
		\end{center}
	\end{example}
	
	We finish this section describing the Ap\'ery set for $S\in\mathscr{R}_{m}$.
	
	\begin{proposition}\label{prop36}
		Let $m\in\mathbb{N}\setminus\{0,1\}$, $S\in\mathscr{R}_{m}$, $\mathrm{msg}(S)=\{m=n_1,n_2,\ldots,n_e\}$. Then $\mathrm{Ap}(S)=\{w(0),w(1),\ldots,w(n)\}$, where $w(i)$ is the least element of the set $\{0,n_2,\ldots,n_e\} \cup \{2n_2,\ldots,2n_e\}$ that is congruent to $i$ modulo $m$.
	\end{proposition}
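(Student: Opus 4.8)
The plan is to reduce the proposition to the single inclusion
\[ \mathrm{Ap}(S,m) \subseteq \{0,n_2,\ldots,n_e\} \cup \{2n_2,\ldots,2n_e\} =: T. \]
This suffices: every element of $T$ belongs to $S$ (each $n_j$ is a generator and $2n_j=n_j+n_j$), and by definition $w(i)$ is the least element of $S$ congruent to $i$ modulo $m$; since $w(i)\in\mathrm{Ap}(S,m)\subseteq T$ (with $w(0)=0\in T$), $w(i)$ is in particular the least element of $T$ lying in that residue class, which is exactly the claim. To prove the inclusion I would first recall two elementary facts. First, a nonzero $w\in\mathrm{Ap}(S,m)$ admits no expression as a sum of generators that uses $m$ (otherwise $w-m\in S$), so $w=n_{j_1}+\cdots+n_{j_k}$ for some $j_1,\ldots,j_k\in\{2,\ldots,e\}$ and some $k\geq 1$; fix one such representation. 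Second, since $m\in\mathrm{msg}(S)$, the generators $n_2,\ldots,n_e$ are pairwise incongruent modulo $m$ and none of them is congruent to $0$ (a congruence $n_i\equiv n_j\pmod m$ with $n_i<n_j$, or $n_i\equiv 0$, would express $n_j$, resp.\ $n_i$, through the remaining generators).

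The heart of the argument is then a short case analysis on $k$, in each case deriving the forbidden conclusion $w-m\in S$. If $k=1$, then $w=n_{j_1}\in T$. If $k=2$ with $n_{j_1}\ne n_{j_2}$, then $n_{j_1}\not\equiv n_{j_2}\pmod m$, so strongness of $S$ yields $n_{j_1}+n_{j_2}-m=w-m\in S$, a contradiction; hence for $k=2$ the two summands coincide and $w=2n_{j_1}\in T$. If $k\geq 3$, write $w=(n_{j_1}+n_{j_2})+r$ with $r:=n_{j_3}+\cdots+n_{j_k}\in S\setminus\{0\}$: when $n_{j_1}\ne n_{j_2}$, as above $n_{j_1}+n_{j_2}-m\in S$, so $w-m=(n_{j_1}+n_{j_2}-m)+r\in S$, a contradiction; when all of $n_{j_1},\ldots,n_{j_k}$ equal a single generator $n$, then $n\not\equiv 2n\pmod m$ and strongness (equivalently, Proposition~\ref{prop30}) gives $3n-m\in S$, whence $w-m=(3n-m)+(k-3)n\in S$, again a contradiction. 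This settles all cases.

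I do not anticipate any real difficulty; the statement is the ``strong'' counterpart of Lemma~\ref{lem19} and Proposition~\ref{prop20}. The only point that deserves attention is checking that the case split for $k\geq 3$ is exhaustive: if no pair among the summands is incongruent modulo $m$, then, because distinct generators are incongruent modulo $m$, all $k$ summands must literally be the same generator $n$, which is precisely the sub-case handled by the identity $3n-m\in S$. It is also worth stressing that the chosen decomposition of $w$ into generators need not be unique or shortest — any fixed one works — and that the residue class $i=0$ is handled trivially by $0\in T$ in the final minimality step.
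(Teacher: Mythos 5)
Your argument is correct. The paper itself states Proposition~\ref{prop36} without proof (as it does with the analogous Lemma~\ref{lem19} and Propositions~\ref{prop20} and~\ref{prop29}), so there is no in-text argument to compare against; your write-up supplies the missing details and is the natural one given the setup. The reduction to the single inclusion $\mathrm{Ap}(S,m)\subseteq T$ is sound, since $T\subseteq S$ and $\mathrm{Ap}(S,m)$ already selects the $S$-minimal representative of each residue class modulo $m$, and the case analysis on the number $k$ of non-$m$ generators in a representation of a nonzero Ap\'ery element correctly invokes strongness via the two instances $n_{j_1}+n_{j_2}-m\in S$ (distinct summands) and $3n-m\in S$ (repeated summand), exactly as encoded in condition~2 of Proposition~\ref{prop30}. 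One small presentational point: in the case $k\geq 3$, the two sub-cases ``$n_{j_1}\neq n_{j_2}$'' and ``all summands equal a single generator'' are not literally exhaustive as first written (e.g.\ $n_{j_1}=n_{j_2}\neq n_{j_3}$). You do resolve this in your closing paragraph, but the proof would read more smoothly if you said up front that when some two of the $k$ summands differ one may reorder so that they are $n_{j_1}$ and $n_{j_2}$, and otherwise all $k$ summands coincide; with that rewording the case split is airtight.
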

	
	\begin{corollary}\label{cor36}
		Let $m\in\mathbb{N}\setminus\{0,1\}$ and $S\in \mathscr{R}_{m}$. Then $m = \mathrm{m}(S) \leq 2\mathrm{e}(S)-1$.
	\end{corollary}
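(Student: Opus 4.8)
The plan is to read off the bound directly from the description of the Apéry set obtained in Proposition~\ref{prop36}. First I would record the elementary fact that, for any numerical semigroup $S$ and any $x\in S\setminus\{0\}$, the set $\mathrm{Ap}(S,x)$ has exactly $x$ elements, since it contains precisely one element of $S$ in each residue class modulo $x$. In particular, $|\mathrm{Ap}(S,m)|=m$, and the elements $w(0),w(1),\ldots,w(m-1)$ are pairwise distinct (they lie in distinct classes modulo $m$).

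Next I would invoke Proposition~\ref{prop36}: for $S\in\mathscr{R}_m$ with $\mathrm{msg}(S)=\{m=n_1,n_2,\ldots,n_e\}$, each $w(i)$ is the least element congruent to $i$ modulo $m$ of the set
\[ T:=\{0,n_2,\ldots,n_e\}\cup\{2n_2,\ldots,2n_e\}, \]
so in particular $w(i)\in T$ for every $i$. Counting the values listed in the definition of $T$ — the element $0$, the $e-1$ generators $n_2,\ldots,n_e$, and the $e-1$ doubled generators $2n_2,\ldots,2n_e$ — gives $|T|\le 1+(e-1)+(e-1)=2\mathrm{e}(S)-1$; any coincidences among these values (for instance $n_i=2n_j$) only make $|T|$ smaller, so the inequality persists. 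Combining the two observations yields $m=|\mathrm{Ap}(S,m)|\le|T|\le 2\mathrm{e}(S)-1$, which is the claim.

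I do not expect a genuine obstacle here: the corollary is an immediate consequence of Proposition~\ref{prop36}, in exact parallel with the corollaries following Propositions~\ref{prop20} and \ref{prop29} (where the containing set has cardinality at most $\frac{e(e+1)}{2}$ and $2^{e-1}$, respectively). The only points that merit a moment's care are the cardinality count for $T$ and the remark that the $w(i)$ are genuinely distinct, both of which are routine.
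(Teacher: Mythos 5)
Your proof is correct and is exactly the argument the paper intends: Proposition~\ref{prop36} exhibits $\mathrm{Ap}(S,m)$ as a subset of $\{0,n_2,\ldots,n_e\}\cup\{2n_2,\ldots,2n_e\}$, a set of at most $2\mathrm{e}(S)-1$ elements, and since the Ap\'ery set always has exactly $m$ elements the bound follows. The cardinality count and the reminder that the $w(i)$ are distinct are both handled correctly, so there is nothing to add.
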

	
	It is interesting to observe that, under the hypotheses of Proposition~\ref{prop36}, $n_i\in\mathrm{Maximals}_{\leq_S}(\mathrm{Ap}(S,m))$ if and only if $2n_i\notin\mathrm{Ap}(S,m)$. In addition, $2n_i\in\mathrm{Ap}(S,m)$ if and only if $2n_i\in\mathrm{Maximals}_{\leq_S}(\mathrm{Ap}(S,m))$. Therefore, from Propositions~\ref{prop21} and \ref{prop36}, we get the next result.
	
	\begin{corollary}\label{cor37}
		Let $m \in \mathbb{N}\setminus \{0,1\}$ and $S\in \mathscr{R}_{m}$. Then $\mathrm{t}(S)=\mathrm{e}(S)-1$.
	\end{corollary}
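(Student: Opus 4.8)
The plan is to compute $\mathrm{t}(S)$ by counting $\mathrm{Maximals}_{\leq_S}(\mathrm{Ap}(S,m))$, since Proposition~\ref{prop21} together with the description of these maximals given just after it yields $\mathrm{t}(S)=|\mathrm{PF}(S)|=|\mathrm{Maximals}_{\leq_S}(\mathrm{Ap}(S,m))|$. Write $\mathrm{msg}(S)=\{m=n_1,n_2,\ldots,n_e\}$. First I would record the structure of the Apéry set: distinct minimal generators lie in distinct residue classes mod $m$ (otherwise the larger one would be a smaller generator plus a multiple of $m$, contradicting minimality), and by the same minimality argument no $2n_j$ smaller than $n_k$ lies in the class of a generator $n_k$; hence, by Proposition~\ref{prop36}, each $n_j$ is the least element of its class, so $\mathrm{Ap}(S,m)=\{0\}\cup\{n_2,\ldots,n_e\}\cup D$ with $D\subseteq\{2n_2,\ldots,2n_e\}$, the three pieces pairwise disjoint, and $2n_i\mapsto n_i$ a bijection from $D$ onto $G':=\{n_i\mid 2n_i\in\mathrm{Ap}(S,m)\}$.

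The core is to establish the two observations stated just before the corollary: (a) for $i\in\{2,\ldots,e\}$, $n_i\in\mathrm{Maximals}_{\leq_S}(\mathrm{Ap}(S,m))$ iff $2n_i\notin\mathrm{Ap}(S,m)$; and (b) $2n_i\in\mathrm{Ap}(S,m)$ iff $2n_i\in\mathrm{Maximals}_{\leq_S}(\mathrm{Ap}(S,m))$. The easy halves are immediate: if $2n_i\in\mathrm{Ap}(S,m)$ then $2n_i-n_i=n_i\in\mathrm{Ap}(S,m)\setminus\{0\}$ shows $n_i$ is not maximal (this is the ``only if'' of (a)), and $\mathrm{Maximals}_{\leq_S}(\mathrm{Ap}(S,m))\subseteq\mathrm{Ap}(S,m)$ gives one direction of (b). For the remaining implications I would suppose $n_i$ (resp.\ $2n_i$, assumed in $\mathrm{Ap}(S,m)$) is not maximal, obtaining $w'\in\mathrm{Ap}(S,m)$ with $v:=w'-n_i$ (resp.\ $v:=w'-2n_i$) a nonzero element of $\mathrm{Ap}(S,m)$. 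By Proposition~\ref{prop36} both $w'$ and $v$ have the form $n_j$ or $2n_j$, and $w'$ cannot be a minimal generator (it is a sum of two nonzero elements of $S$), so $w'=2n_k$. This leaves the identities $2n_k=n_i+n_j$, $2n_k=n_i+2n_j$, $2n_k=2n_i+n_j$, $2n_k=2n_i+2n_j$ to discard. The last reduces to $n_k=n_i+n_j$, impossible by minimality; in each other case I would show $2n_k-m\in S$: when two distinct generators appear, strongness gives $n_i+n_j-m\in S$ (since $n_i\not\equiv n_j\bmod m$) and one adds back the remaining term, and when $i=j$ one is in the situation $3n_i=2n_k$ and uses $3n_i-m\in S$ from Proposition~\ref{prop30}. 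Since $w'=2n_k\in\mathrm{Ap}(S,m)$ is the least element of $S$ in its class, having $2n_k-m\in S$ is a contradiction. (In case (a), the subcase $v=n_i$ directly contradicts the hypothesis $2n_i\notin\mathrm{Ap}(S,m)$.)

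Granting (a) and (b), the count is routine: $0$ is never maximal, and for each $i\in\{2,\ldots,e\}$ exactly one of $n_i,2n_i$ is simultaneously in $\mathrm{Ap}(S,m)$ and maximal---namely $2n_i$ if $2n_i\in\mathrm{Ap}(S,m)$, and $n_i$ otherwise---while by the decomposition above every maximal arises this way. Hence $|\mathrm{Maximals}_{\leq_S}(\mathrm{Ap}(S,m))|=e-1$ and $\mathrm{t}(S)=\mathrm{e}(S)-1$. The main obstacle I expect is purely bookkeeping inside the proof of (a) and (b): organizing the sum-identities so that the ``one generator'' subcase (giving $3n_i=2n_k$) is separated from the ``two distinct generators'' subcase, and being careful that it is exactly the ``least element of its class'' property of $\mathrm{Ap}(S,m)$ that both forbids $2n_j$ from lying in a generator's class and delivers the final contradiction $2n_k-m\in S$.
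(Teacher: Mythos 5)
Your proposal is correct and takes essentially the same route as the paper: you count $\mathrm{Maximals}_{\leq_S}(\mathrm{Ap}(S,m))$ via Propositions~\ref{prop21} and \ref{prop36} and the two equivalences stated just before the corollary, observing that for each $i\in\{2,\ldots,e\}$ exactly one of $n_i$, $2n_i$ is maximal. The only difference is that the paper leaves the verification of those two equivalences to the reader, while you supply the case analysis (reducing to the identities $2n_k=n_i+n_j$, $n_i+2n_j$, $2n_i+n_j$, $2n_i+2n_j$ and invoking strongness through $n_i+n_j-m\in S$ and $3n_i-m\in S$); your details are correct.
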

	
	It is well known that, if $S$ is a numerical semigroup, then $\mathrm{e}(S) \leq \mathrm{m}(S)$ and $\mathrm{t}(S) \leq \mathrm{m}(S)-1$ (see Proposition~2.10 and Corollary~2.23 in \cite{springer}). Combining these facts with Corollaries~\ref{cor36} and \ref{cor37}, we have the next result.
	
	\begin{corollary}\label{cor38}
		Let $m \in \mathbb{N}\setminus \{0,1\}$ and $S\in \mathscr{R}_{m}$. Then $\frac{\mathrm{m}(S)-1}{2} \leq \mathrm{t}(S) \leq \mathrm{m}(S)-1$ (or, equivalently, $\frac{\mathrm{m}(S)+1}{2} \leq \mathrm{e}(S) \leq \mathrm{m}(S)$).
	\end{corollary}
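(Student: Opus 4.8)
The plan is to obtain both chains of inequalities by bookkeeping on facts already in hand; no genuinely new argument is required. First I would invoke Corollary~\ref{cor36}, which gives $\mathrm{m}(S) \leq 2\mathrm{e}(S)-1$; rearranging, $\mathrm{e}(S) \geq \frac{\mathrm{m}(S)+1}{2}$. Together with the general bound $\mathrm{e}(S) \leq \mathrm{m}(S)$ (Proposition~2.10 of \cite{springer}), this already establishes the parenthetical statement $\frac{\mathrm{m}(S)+1}{2} \leq \mathrm{e}(S) \leq \mathrm{m}(S)$.

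Next I would transfer this to the type via Corollary~\ref{cor37}, which asserts $\mathrm{t}(S) = \mathrm{e}(S)-1$ for every strong numerical semigroup. Subtracting $1$ throughout $\frac{\mathrm{m}(S)+1}{2} \leq \mathrm{e}(S) \leq \mathrm{m}(S)$ yields $\frac{\mathrm{m}(S)-1}{2} \leq \mathrm{t}(S) \leq \mathrm{m}(S)-1$, the desired conclusion. The two formulations are interchangeable precisely because of the identity $\mathrm{t}(S)=\mathrm{e}(S)-1$; equivalently, one could derive the upper bound $\mathrm{t}(S)\leq \mathrm{m}(S)-1$ directly from Corollary~2.23 of \cite{springer} and the lower bound from Corollary~\ref{cor36} via that identity, arriving at the same inequalities.

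I do not expect any step to be an obstacle: the substantive work is already done in Proposition~\ref{prop36} (whence Corollaries~\ref{cor36} and \ref{cor37}), so the present statement is a one-line consequence. The only point worth a brief remark is that $\frac{\mathrm{m}(S)-1}{2}$ need not be an integer, so the inequalities are read over $\mathbb{Q}$; when $\mathrm{m}(S)$ is even the lower bound can be tightened to $\frac{\mathrm{m}(S)}{2}$, but the stated form is what is needed here.
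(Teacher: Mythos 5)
Your proposal is correct and follows essentially the same route as the paper: the authors also obtain the result by combining Corollary~\ref{cor36}, Corollary~\ref{cor37}, and the classical bounds $\mathrm{e}(S)\leq\mathrm{m}(S)$ and $\mathrm{t}(S)\leq\mathrm{m}(S)-1$ from Proposition~2.10 and Corollary~2.23 of \cite{springer}. The equivalence of the two formulations via $\mathrm{t}(S)=\mathrm{e}(S)-1$ is exactly the paper's intent, so no gaps.
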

	
	\begin{remark}
		Contrary to what happens in Sections~\ref{2-level-ns} and \ref{thin-ns}, the concept of strong numerical semigroup does not have a natural generalisation. In fact, we have, at least, two possibilities.
		\begin{enumerate}
			\item $x_1+\cdots x_n-\mathrm{m}(S)\in S$ for all $(x_1,\ldots,x_n) \in (S\setminus\{0\})^n$ such that $x_i\not\equiv x_j \pmod {\mathrm{m}(S)}$, for all $i\not=j$. In this case, $(x_1,\ldots,x_n) \in A_n$, where
			\[ A_n=\left\{ (\alpha_1,\ldots,\alpha_n) \in \{1,\ldots,m-1\}^n \mid \alpha_i\not=\alpha_j \mbox{ for all } i\not=j \right\}. \]
			\item $x_1+\cdots x_n-\mathrm{m}(S)\in S$ for all $(x_1,\ldots,x_n) \in B_n$, where
			\[ B_n=\{1,\ldots,m-1\}^n \setminus \{(1,\ldots,1),(2,\ldots,2),\ldots,(m-1,\ldots,m-1)\}.\]
		\end{enumerate}
		It is clear that, if $n\geq3$, then $A_n\subsetneq B_n$ and, in consequence, $\mathscr{C}(m,B_n) \subseteq \mathscr{C}(m,A_n)$. In addition, it is guessed that, if $n<m$, then the second inclusion will be strict.
	\end{remark}

	\section*{Acknowledgement}
%
	
	This version of the article has been accepted for publication, after peer review but is not the Version of Record and does not reflect post-acceptance improvements, or any corrections. The Version of Record is available online at: http://dx.doi.org/10.1007/s13348-021-00339-0.

\end{document}